\documentclass[11pt,reqno]{amsart}

\addtolength{\textwidth}{.5cm}
\addtolength{\evensidemargin}{-2cm}
\addtolength{\oddsidemargin}{-2cm}
%\linespread{1}
\parskip .2cm

\usepackage[all]{xy}
\usepackage{url}
\usepackage{amssymb}
\usepackage{hyperref}
\usepackage[margin=1.2 in]{geometry}
\usepackage{color,soul}
\usepackage{amsthm}
\usepackage{lipsum}
\usepackage{enumitem}
%\usepackage[amsmath]{ntheorem}
% Theorem environments.
%
\usepackage{enumitem}% http://ctan.org/pkg/enumitem
\setbox1=\hbox{$\bullet$}

\usepackage{mathtools}

\DeclareMathOperator{\td}{tr.deg}

\DeclareMathOperator{\Frac}{Frac}
\DeclareMathOperator{\id}{id}

\theoremstyle{plain}
\newtheorem{theorem}[subsubsection]{Theorem}
\newtheorem{proposition}[subsubsection]{Proposition}
\newtheorem{lemma}[subsubsection]{Lemma}

\newtheorem{corollary}[subsubsection]{Corollary}
\newtheorem{claim}[subsubsection]{Claim}

\theoremstyle{definition}
\newtheorem{definition}[subsubsection]{Definition}
\newtheorem{example}[subsubsection]{Example}
\newtheorem{examples}[subsubsection]{Examples}

\newtheorem{remark}[subsubsection]{Remark}
\newtheorem{remarks}[subsubsection]{Remarks}

\theoremstyle{definition}

\theoremstyle{plain}

\newenvironment{customthm}[1]
  {\innercustomthm}
  {\endinnercustomthm}

\theoremstyle{plain}

\theoremstyle{plain}

\theoremstyle{definition}

\theoremstyle{definition}

\numberwithin{equation}{subsubsection}
\numberwithin{equation}{subsubsection}
%\Euler for math | Palatino for rm | Helvetica for ss | Courier for tt
%\usepackage{euler} % math
%\usepackage{eulervm} % a better implementation of the euler package (not in gwTeX)
\normalfont
\usepackage[T1]{fontenc}{}
\usepackage{setspace}

\usepackage{tikz-cd}

\parskip .2cm
%\usepackage{biblatex} 
%\bibliography{Bibliography}

\makeatletter
\def\@tocline#1#2#3#4#5#6#7{\relax
  \ifnum #1>\c@tocdepth % then omit
  \else
    \par \addpenalty\@secpenalty\addvspace{#2}%
    \begingroup \hyphenpenalty\@M
    \@ifempty{#4}{%
      \@tempdima\csname r@tocindent\number#1\endcsname\relax
    }{%
      \@tempdima#4\relax
    }%
    \parindent\z@ \leftskip#3\relax \advance\leftskip\@tempdima\relax
    \rightskip\@pnumwidth plus4em \parfillskip-\@pnumwidth
    #5\leavevmode\hskip-\@tempdima
      \ifcase #1
       \or\or \hskip 1em \or \hskip 2em \else \hskip 3em \fi%
      #6\nobreak\relax
    \dotfill\hbox to\@pnumwidth{\@tocpagenum{#7}}\par
    \nobreak
    \endgroup
  \fi}
\makeatother
\usepackage{hyperref}

\pagestyle{plain}

\title{Frobenius splitting of valuation rings and $F$-singularities of centers}
\author{Rankeya Datta\vspace{-2ex}}
\address{851 S Morgan St, Science and Engineering Offices, Office 411, Chicago, IL 60607}
\email{rankeya@uic.edu}
\thanks{}
%\date{April 11, 2017. \textcolor{red}{PRELIMINARY VERSION. DO NOT DISTRIBUTE}}                                           % Activate to display a given date or no date
%\subsection{}

\begin{document}

\maketitle
\vspace{-4ex}

\begin{abstract}
Using a local monomialization result of Knaf and Kuhlmann, we prove that the valuation ring of an Abhyankar valuation of a function field over a perfect ground field of prime characteristic is Frobenius split. We show that a Frobenius splitting of a sufficiently well-behaved center lifts to a Frobenius splitting of the valuation ring. We also investigate properties of valuations centered on arbitrary Noetherian domains of prime characteristic. In contrast to \cite{DS16, DS17}, this paper emphasizes the role of centers in controlling properties of valuation rings in prime characteristic. 
\end{abstract}

\section{Introduction}
One of the goals of this paper is to prove the following result.

\begin{customthm}{A}
\label{Theorem-A}
Let $K$ be a finitely generated field extension of a perfect field $k$ of prime characteristic. Then the valuation ring of any Abhyankar valuation of $K/k$ is Frobenius split.
%Suppose $K$ is a finitely generated field extension of an F-finite ground field $k$ of characteristic $p >0$, and  $v$ is an Abhyakar valuation of $K/k$. If the residue field $\kappa_v$ is separable over $k$, then the valuation ring $R_v$ is Frobenius split.
\end{customthm}

\vspace{-1ex}

\noindent For a valuation $\nu$ of $K/k$, if $\Gamma_{\nu}$ is the value group and $(R_\nu, \mathfrak{m}_\nu, \kappa_{\nu})$ the valuation ring, then $\nu$ is \emph{Abhyankar} if
$$\dim_{\mathbb{Q}}(\mathbb{Q} \otimes_{\mathbb Z} \Gamma_{\nu}) + \td{\kappa_{\nu}/k} = \td{K/k}.$$
Abhyankar valuations, often also called \emph{quasimonomial} valuations in characteristic $0$, extend the class of valuations associated to prime divisors on normal models, since $\dim_{\mathbb Q} (\mathbb{Q} \otimes_{\mathbb Z} \Gamma_{\nu}) = 1$ and $\td{\kappa_{\nu}/k} = \td{K/k} - 1$ for divisorial valuations. Nevertheless, non-divisorial Abhyankar valuations arise naturally in geometry (see \cite{Spi90, ELS03, JF04, FJ05, JM12, Mus12, Tem13, RS14, Tei14, Pay14, Blu18} for some applications), and they possess  many of the good properties of their Noetherian counterparts. For example, the value group of any Abhyankar valuation is a free abelian group of finite rank, and its residue field is a finitely generated extension of the ground field. 

Divisorial valuation rings over perfect fields of prime characteristic are Frobenius split. Indeed, when a Noetherian local ring $R$ is \emph{$F$-finite}, that is when the Frobenius ($p$-th power) endomorphism $F: R \rightarrow R$ is a finite ring map, a famous result of Kunz shows that regularity of $R$ is characterized by $R$ being free over its $p$-th power subring $R^p$ \cite[Theorem 2.1]{Kun69}. Therefore $F$-finite regular rings, and consequently divisorial valuation rings over perfect fields, are Frobenius split (the same conclusion can be drawn using the Direct Summand Theorem). Thus Theorem \ref{Theorem-A} extends a well-known fact about divisorial valuation rings to a class of valuation rings that behaves the most like divisorial ones.

A key ingredient in our proof of Theorem \ref{Theorem-A} is the following result of Knaf and Kuhlmann which says that one can locally monomialize finite subsets of Abhyankar valuation rings in \emph{any} characteristic. %They prove the following:

\begin{customthm}{{1}}{\cite[Theorem 1]{KK05}}
\label{local-uniformization}
Let $K$ be a finitely generated field extension of a field $k$ of any characteristic, and let $\nu$ be an Abhyankar valuation of $K/k$ such that the residue field $\kappa_{\nu}$ is separable over $k$. Then for any finite set $Z \subseteq R_{\nu}$, $R_{\nu}$ is centered on a regular local ring $(A, \mathfrak{m}_A, \kappa_A)$ essentially of finite type over $k$ with fraction field $K$ satisfying the following properties:
\begin{enumerate}
\item The Krull dimension of $A$ equals $d \coloneqq \dim_{\mathbb{Q}}(\mathbb{Q} \otimes_{\mathbb Z} \Gamma_{\nu})$.

\item $Z \subseteq A$, and there exists a regular system of parameters $\{x_1, \dots, x_d\}$ of $A$ such that every $z \in Z$ admits a factorization
$$z = ux^{a_1}_1\dots x^{a_d}_d,$$
for some $u \in A^{\times}$, and $a_i \in \mathbb{N} \cup \{0\}$.
\end{enumerate}
\end{customthm}

\noindent When the ground field $k$ is perfect, any Abhyankar valuation $\nu$ admits a local monomialization in the sense of  Theorem \ref{local-uniformization} because $\kappa_\nu$ is automatically separable over $k$. Theorem \ref{local-uniformization} allows us to additionally choose a center of  $\nu$ on a regular local $k$-algebra $A$ such that $A$ has a regular system of parameters $\{x_1,\dots,x_d\}$ whose valuations freely generate $\Gamma_\nu$ and the residue field of $A$ coincides with $\kappa_{\nu}$ (Lemma \ref{regular-center}). Our strategy then is to identify a suitable Frobenius splitting of $A$ that lifts to a Frobenius splitting of $R_{\nu}$. %This is carried out in Section \ref{Proof of Theorem Theorem-A}. 

A proof of Theorem \ref{Theorem-A} was announced in \cite[Theorem 5.1]{DS16}, where Frobenius splitting was deduced as a consequence of the \emph{incorrect} assertion that Abhyankar valuation rings are F-finite. On the contrary, \cite[Theorem 0.1]{DS17} shows that finiteness of Frobenius for valuation rings of function fields (a \emph{function field} is a finitely generated extension of a base field) is equivalent to the associated valuation being divisorial, and so, Abhyankar valuations rings that are not $F$-finite are abundant. The current paper rectifies the error in \cite{DS16}.  %In particular, the question of Frobenius splitting of Abhyankar valuation rings remains open.

On the other hand, the proof of \cite[Theorem 5.1]{DS16} does establish that a valuation $\nu$ of an $F$-finite function field $K/k$ is Abhyankar if and only if 
\begin{equation}
\label{1.0.1.1}
[\Gamma_\nu:p\Gamma_\nu][\kappa_\nu:\kappa_\nu^p] = [K:K^p].
\end{equation}
Said differently, $\nu$ is Abhyankar precisely when the extension of valuations $\nu/\nu^p$ is \emph{defectless}, where $\nu^p$ denotes the restriction of $\nu$ to the subfield $K^p$. We refer the reader to \cite[Page 281]{FV11} for the definition of defect of an extension of valuations. 

The second goal of this paper is to generalize the characterization of (\ref{1.0.1.1}) to valuations, not necessarily of function fields, that admit a Noetherian local center. When a valuation $\nu$ of an arbitrary field $K$ is centered on a Noetherian local domain $(R, \mathfrak m_R, \kappa_R)$ such that $\Frac(R) = K$, one has the following beautiful inequality established by Abhyankar \cite[Theorem 1]{Abh56}:
\begin{equation}
\label{1.0.1.2}
\dim_{\mathbb{Q}}(\mathbb{Q} \otimes_{\mathbb Z} \Gamma_\nu) + \td{\kappa_\nu/\kappa_R} \leq \dim{R}.
\end{equation}
When equality holds in (\ref{1.0.1.2}), $\nu$ behaves like an Abhyankar valuation of a function field. For example, the value group $\Gamma_\nu$ is then again a free abelian group of finite rank, and the residue field $\kappa_\nu$ is finitely generated over $\kappa_R$. However, whether a valuation of a function field is Abhyankar is intrinsic to the valuation, while equality in  (\ref{1.0.1.2}) with respect to a center depends, unsurprisingly, on the center as well (see Example \ref{non-excellent-DVR} for an illustration). Bearing this difference in mind, we call a Noetherian center $R$ an \emph{Abhyankar center} of $\nu$, if $\nu$ satisfies equality in (\ref{1.0.1.2}) with respect to $R$.

In practice one is often interested in centers satisfying additional restrictions. For example, in the local uniformization problem for valuations of function fields, one seeks centers that are regular and essentially of finite type over a ground field. Although satisfying equality in (\ref{1.0.1.2}) is not intrinsic to a valuation, the property of possessing Abhyankar centers from a more restrictive class of local rings may become independent of the center. For example, when $K/k$ is a function field and $\mathcal C$ is the collection of local rings that are essentially of finite type over $k$ with fraction field $K$, then a valuation $\nu$ admits an Abhyankar center from the collection $\mathcal C$ precisely when $\nu$ is an Abhyankar valuation of $K/k$. Moreover, then all centers of $\nu$ from $\mathcal C$ are Abhyankar centers (see Section \ref{Noetherian, F-finite centers}). In other words, the property of possessing Abhyankar centers that are essentially of finite type over $k$ is intrinsic to valuations of function fields over $k$. 

Our investigation reveals that even when the fraction field of a valuation is not a function field, one can find a reasonably broad class of Noetherian local rings such that the property of admitting an Abhyankar center from this class is  independent of the choice of the center. More precisely, we show the following:

\begin{customthm}{\ref{f-finiteness-general-abhyankar}}
Let $(R, \mathfrak{m}_R, \kappa_R)$ be a Noetherian $F$-finite local domain of characteristic $p > 0$ and fraction field $K$. Suppose $\nu$ is a non-trivial valuation of $K$ centered on $R$ with value group $\Gamma_\nu$ and valuation ring $(V, \mathfrak{m}_\nu, \kappa_\nu)$.  Then $R$ is an Abhyankar center of $\nu$ if and only if $$[\Gamma_\nu:p\Gamma_\nu][\kappa_\nu:\kappa^p_\nu] = [K:K^p].$$
\end{customthm}

\noindent Since the identity $[\Gamma_\nu:p\Gamma_\nu][\kappa_\nu:\kappa^p_\nu] = [K:K^p]$ does not depend on the center $R$, Theorem \ref{f-finiteness-general-abhyankar} implies that possessing $F$-finite Abhyankar centers is intrinsic to a valuation. 

Although imposing finiteness of Frobenius on Noetherian centers appears to be a strong restriction, the class of such rings include any center one is likely to encounter is geometric applications. For example, any generically $F$-finite excellent domain of prime characteristic is also $F$-finite. Thus Theorem \ref{f-finiteness-general-abhyankar} implies that the property of a valuation possessing generically $F$-finite, excellent Abhyankar centers is independent of the choice of such centers. 

We can draw interesting conclusions from Theorem \ref{f-finiteness-general-abhyankar} even for valuations of function fields. First note that it generalizes (\ref{1.0.1.1}) (see Remark \ref{Abhyankar-center-remarks}(i)). Furthermore, if $K$ is a function field over a perfect field $k$ of prime characteristic, then Theorem \ref{f-finiteness-general-abhyankar} and (\ref{1.0.1.1}) imply that any valuation of $K/k$ that possesses an excellent Abhyankar center is an Abhyankar valuation of $K/k$ (Corollary \ref{excellent-Abhyankar-centers-function-fields}). Admitting excellent Abhyankar centers is a priori much weaker than admitting Abhyankar centers that are essentially of finite type over $k$, and so Corollary \ref{excellent-Abhyankar-centers-function-fields} is not obvious. In fact, the corresponding statement is false when the ground field $k$ has characteristic $0$ -- one can easily construct a non-Abhyankar valuation of $K/\mathbb{C}$ which has an Abhyankar center that is an excellent local domain (see Remark \ref{Abhyankar-center-remarks}(vi) or \cite[Example 1(iv)]{ELS03}).

Theorem \ref{f-finiteness-general-abhyankar} \emph{does not} claim that if a valuation $\nu$ of an $F$-finite field $K$ satisfies $[\Gamma_\nu:p\Gamma_\nu][\kappa_\nu:\kappa^p_\nu] = [K:K^p]$, then $\nu$ is centered on an excellent local domain; the latter assertion is false even when $K$ is not perfect. Indeed, using the theory of $F$-singularities of valuations developed in \cite{DS16, DS17}, one can show that if the valuation ring of $\nu$ is $F$-finite, then the identity $[\Gamma_\nu:p\Gamma_\nu][\kappa_\nu:\kappa^p_\nu] = [K:K^p]$ is always satisfied (Section \ref{Summary of F-singularities for valuation rings}, (\ref{assuming-f-finiteness})(ii)). However, we prove in this paper that a non-Noetherian $F$-finite valuation ring \emph{cannot} be centered on any excellent local domain of its fraction field (Section \ref{Summary of F-singularities for valuation rings}, (\ref{non-Noetherian-$F$-finite-rings})), and explicitly construct such a valuation ring of $K = L(X)$, where $L$ is a perfect field admitting a non-trivial valuation (Example \ref{valuation-non-perfect-no-F-finite-center}). The valuation ring of Example \ref{valuation-non-perfect-no-F-finite-center} \emph{does not} contain the ground field $L$, and so its existence does not contradict  local uniformization in positive characteristic. On the contrary, this paper and the author's prior work with Karen Smith \cite[Theorem 0.1]{DS17} shows it is impossible to construct non-Noetherian $F$-finite valuation rings of function fields when the valuation rings contain the ground field (see also Remark \ref{function-vs-arbitrary-field}). Thus, pathologies such as Example \ref{valuation-non-perfect-no-F-finite-center} cease to exist for valuations trivial on the ground field of a function field.

%For a somewhat trivial example, note that if $K$ is perfect, but equipped with a non-trivial valuation $\nu$, then $\nu$ does not admit any Noetherian center on $K$, because since $K$ is perfect, it is the only Noetherian ring whose fraction field is $K$. However, it is also possible to extend $\nu$ to a valuation $w$ of $K(X)$, \emph{not} trivial on the ground field $K$, whose valuation ring $R_w$ satisfies many nice properties (for instance $R_w$ is Frobenius split), but such that $w$ is not centered on any excellent local domain with fraction field $K(X)$ (Example \ref{valuation-non-perfect-no-F-finite-center}).

In the final section of this paper (Section \ref{Summary of F-singularities for valuation rings}), we summarize all known results on F-singularities of valuation rings for the convenience of the reader, mainly drawing from the paper \cite{DS16} and the accompanying corrigendum \cite{DS17}. Results are grouped according to the type of $F$-singularity they characterize, which we hope will make it easier for the reader to navigate \cite{DS16, DS17}.  The summary is not just limited to a recollection of old results; many new results are also proved. For example, we show that when $\nu/\nu^p$ is \emph{totally unramified} or has maximal defect, that is, when 
$$[\Gamma_\nu:p\Gamma_\nu][\kappa_\nu:\kappa_\nu^p] = 1,$$ 
then the valuation ring of $\nu$ cannot be Frobenius split (Section 5, (\ref{totally-unramified})). To put this in context, Theorem \ref{Theorem-A} implies, in contrast, that if $\nu/\nu^p$ is \emph{defectless} for a valuation $\nu$ of a function field over a perfect field, then its valuation ring is always Frobenius split. Thus, Frobenius splitting of valuation rings is well-understood when the defect of $\nu/\nu^p$ is one of two possible extremes, but seems to be mysterious otherwise. We also show that if $V$ is valuation ring of Krull dimension $1$ whose fraction field is spherically complete with respect to the multiplicative norm induced by the valuation, then $V$ is Frobenius split (Section 5, (\ref{spherical-complete})). This result should be viewed as a generalization of the well-known fact that a complete discrete valuation ring of prime characteristic is Frobenius split.

\textbf{Acknowledgments:} The question of Frobenius splitting of valuation rings arose in conversations of Zsolt Patakfalvi, Karl Schwede and Karen Smith. While Frobenius splitting of Abhyankar valuations was suspected in my prior work with Karen, the idea of using local uniformization took shape while I was visiting University of Utah. I thank Karl Schwede, Raymond Heitmann, Linquan Ma and Anurag Singh for many fruitful conversations during my stay in Utah, and Karl for the invitation. I am also grateful to Karen, Linquan and Raymond Heitmann for helpful comments on a draft of this paper, and to Eric Canton, Takumi Murayama and Matthew Stevenson for allowing me to include (Section 5, (\ref{spherical-complete})) that we realized while working on a different project. Further thanks go to Franz-Viktor Kuhlmann, whose question on the relationship between defect and Abhyankar valuations inspired the material of Section \ref{Noetherian, F-finite centers}, and to Steven Dale Cutkosky for helpful conversations. My work was supported by department and summer fellowships at University of Michigan and NSF Grant DMS \#1501625.

%\vspace{-1ex}

%\noindent When the ground field $k$ is perfect, $\kappa_v$ is automatically separable over $k$ since it is a finitely generated field extension of $k$, and so Theorem \ref{Theorem-A} readily implies 

%\begin{customcor}{B}
%\label{Corollary-C}
%The valuation ring of any Abhyankar valuation of a finitely generated field extension over a perfect field of prime characteristic is Frobenius split.
%\end{customcor}

\vspace{-1ex}

 %Although we expect Theorem \ref{Theorem-A} to hold for \emph{every} Abhyankar valuation, we do not know how to remove the separability hypothesis on the residue field because our proof relies on the following \emph{local uniformization} result of Knaf and Kuhlmann.

%\vspace{-1ex}

%\noindent 

%In light of Theorem \ref{Theorem-A} we propose the following conjecture:

%\begin{customconj}{C}
%\label{conj-C}
%For an  Abhyankar valuation $v$ of $K/k$, where $K$ is a finitely generated field extension of an F-finite ground field $k$, the valuation ring $R_v$ is Frobenius split.
%\end{customconj}

%\noindent If one can prove local uniformization of Abhyankar valuations (Theorem \ref{local-uniformization}) without any separability assumptions on residue fields, Conjecture \ref{conj-C} will follow from our proof of Theorem \ref{Theorem-A}. 

%\vspace{1mm}

\section{Background and conventions}

\subsection{Valuations} \label{valuations-background} All valuations are written additively, and local rings are not necessarily Noetherian. We say a valuation $\nu$ with valuation ring $V$ is \emph{centered} on a local ring $A$, or $A$ is a \emph{center} of $\nu$ (or $V$) if $A \subseteq V$, and the maximal ideal of $V$ contracts to the maximal ideal of $A$. It is always assumed that the fraction field of a center coincides with the fraction field of the valuation ring. %When we say a valuation $\nu$ is centered on a local ring $A$, we mean the valuation ring of $\nu$ is centered on $A$.

Let $K$ be a finitely generated field extension of a field $k$, that is $K$ is a \emph{function field} over $k$.  A valuation $\nu$ of $K/k$ (this means $\nu$ is trivial on $k$) with valuation ring $(R_{\nu}, \mathfrak{m}_{\nu}, \kappa_{\nu})$ and value group $\Gamma_{\nu}$ satisfies the fundamental inequality
\begin{equation}
\label{Abh-ineq}
\dim_{\mathbb{Q}}(\mathbb{Q} \otimes_{\mathbb Z} \Gamma_{\nu}) + \td{\kappa_{\nu}/k} \leq \td{K/k}.
\end{equation}
If equality holds in the above inequality, then $\nu$ is called an \textbf{Abhyankar valuation} or a \textbf{quasi-monomial valuation} of $K/k$, and the associated valuation ring of $\nu$ is called an \textbf{Abhyankar valuation ring} of $K/k$.

For Abhyankar valuations it is well-known \cite[VI, $\mathsection 10.3$, Corollary 1]{Bou89} that $\Gamma_{\nu}$ is a \emph{free abelian group} of finite rank equal to 
$$d \coloneqq \dim_{\mathbb{Q}}(\Gamma_{\nu} \otimes_{\mathbb{Z}} \mathbb{Q}),$$
and the residue field $\kappa_{\nu}$ is a \emph{finitely generated field extension} of $k$ of transcendence degree $n - d$, where 
$$n \coloneqq \td{K/k}.$$
If the ground field $k$ has prime characteristic $p$ and $[k:k^p] < \infty$, this implies 
$$\textrm{$[K:K^p] = [k:k^p]p^n$ and $[\kappa_{\nu}:\kappa^p_{\nu}] = [k:k^p]p^{n-d}$}.$$ An important fact, used implicitly throughout the paper, is that if $x, y \in K^{\times}$ are non-zero elements such that $x+y \neq 0$ and $\nu(x) \neq \nu(y)$, then 
$$\nu(x+ y) = \inf\{\nu(x), \nu(y)\}.$$ 
A \emph{discrete valuation ring} (abbreviated DVR) is a Noetherian valuation ring which is not a field.  Equivalently, a DVR is a dimension $1$ regular local ring.

\subsection{Frobenius} Let $R$ be a ring of prime characteristic. We have the (absolute) Frobenius map
$$F: R \rightarrow R,$$
which maps an element $r \in R$ to its $p$-th power. The target copy of $R$ is usually considered as an $R$-module by restriction of scalars via $F$, and is then denoted $F_*R$. In other words, if $r \in R$ and $x \in F_*R$, then $r \cdot x = r^px$. For an ideal $I$ of $R$, by $I^{[p^e]}$ we mean the ideal generated by the $p^e$-th powers of elements of $I$. Thus $I^{[p^e]}$ is the expansion of $I$ under $F^e$, the $e$-th iterate of Frobenius.

Quite remarkably, the Frobenius map can detect when a Noetherian ring is regular, and the foundational result in the theory of $F$-singularities is the following:

\begin{theorem}[\cite{Kun69}, Theorem 2.1]
\label{Kunz-theorem}
Let $R$ be a Noetherian ring of prime characteristic. Then $R$ is regular if and only if $F: R \rightarrow F_*R$ is a flat ring homomorphism.
\end{theorem}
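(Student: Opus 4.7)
The plan is to reduce to the local case and handle the two directions separately, with the reverse direction being substantially harder. Since both regularity and flatness can be checked locally at maximal ideals, I first localize and assume $(R, \mathfrak{m}, k)$ is a Noetherian local ring of characteristic $p$.

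For the forward direction ($R$ regular $\Rightarrow$ $F$ flat), I plan to use the local criterion of flatness to reduce to showing $\mathrm{Tor}_1^R(k, F_*R) = 0$. Let $x_1, \ldots, x_n$ be a regular system of parameters, so in particular a regular sequence since a regular local ring is Cohen--Macaulay. The Koszul complex $K_\bullet(x_1, \ldots, x_n; R)$ is then a free resolution of $k$. Tensoring with $F_*R$ yields $K_\bullet(x_1, \ldots, x_n; F_*R)$, and the key observation is that $x_i$ acts on $F_*R$ by multiplication by $x_i^p$ inside $R$. So this Koszul complex is $K_\bullet(x_1^p, \ldots, x_n^p; R)$. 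Since $x_1^p, \ldots, x_n^p$ remain a system of parameters in the Cohen--Macaulay ring $R$, they form a regular sequence, hence the Koszul homology vanishes in positive degrees. This gives $\mathrm{Tor}_i^R(k, F_*R) = 0$ for all $i \geq 1$, and in particular the vanishing required by the local criterion.

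For the reverse direction ($F$ flat $\Rightarrow$ $R$ regular), the plan is to show that $k$ has finite projective dimension over $R$, so that $R$ is regular by the Auslander--Buchsbaum--Serre theorem. Since $F$ is flat, so is $F^e$ for every $e$. Choosing a minimal generating set $x_1, \ldots, x_n$ of $\mathfrak{m}$ (where $n = \mathrm{edim}(R)$), the same Koszul computation as above shows that for every $e$, $\mathrm{Tor}_i^R(k, F^e_*R)$ vanishes precisely when $x_1^{p^e}, \ldots, x_n^{p^e}$ is a regular sequence, and by flatness of $F^e$ these Tor modules do vanish for $i \geq 1$. Hence $x_1^{p^e}, \ldots, x_n^{p^e}$ is a regular sequence in $R$ for every $e$; taking $e$ large enough that these powers form a system of parameters (using the Cohen--Macaulay-type length estimate on $R/\mathfrak{m}^{[p^e]}$), we conclude $n \leq \dim R$, and combined with the reverse inequality this forces $n = \dim R$, yielding regularity. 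Alternatively, one can invoke the Peskine--Szpiro intersection theorem after observing that $F^e_*R$ provides arbitrarily large flat modules supported at $\mathfrak{m}$.

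The main obstacle is the reverse direction: extracting a regular sequence of generators of $\mathfrak{m}$ from flatness of Frobenius requires either a careful Hilbert--Samuel multiplicity comparison (as in Kunz's original proof) or appeal to later homological machinery such as the new intersection theorem. The forward direction is essentially the Koszul computation together with the local criterion; the subtlety lies entirely in converting a vanishing of $\mathrm{Tor}$ against $F^e_*R$ into the assertion that a minimal generating set of $\mathfrak{m}$ is a regular sequence.
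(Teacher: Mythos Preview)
The paper does not prove this theorem; it is quoted from \cite{Kun69} as background and used as a black box. So there is no argument in the paper to compare against, and I am evaluating your sketch on its own merits.

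Your forward direction is correct: the Koszul complex on a regular system of parameters resolves $k$, its base change along $F$ is the Koszul complex on the $p$-th powers, and these remain a regular sequence because $R$ is Cohen--Macaulay; the local criterion of flatness (applied to the local homomorphism $F\colon R \to R$, with $F_*R$ finitely generated over the target copy of $R$) then finishes.

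Your reverse direction, however, has a genuine gap. You write that ``the same Koszul computation as above shows that for every $e$, $\mathrm{Tor}_i^R(k, F^e_*R)$ vanishes precisely when $x_1^{p^e}, \ldots, x_n^{p^e}$ is a regular sequence.'' But that computation identified $\mathrm{Tor}$ with Koszul homology only because $K_\bullet(x_1,\ldots,x_n;R)$ was a \emph{resolution} of $k$, which requires $x_1,\ldots,x_n$ to already be a regular sequence --- exactly the conclusion you want. Without that, flatness of $F^e$ gives you $\mathrm{Tor}_i^R(k,F^e_*R)=0$, but you cannot equate this with $H_i(x_1^{p^e},\ldots,x_n^{p^e};R)$. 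What flatness \emph{does} give is
\[
H_i(x_1^{p^e},\ldots,x_n^{p^e};R)\;\cong\; H_i(x_1,\ldots,x_n;R)\otimes_R F^e_*R,
\]
so the Koszul homology on the $p^e$-th powers is a faithfully flat base change of the original Koszul homology; but neither side is known to vanish, so this is not progress. The subsequent sentence ``taking $e$ large enough that these powers form a system of parameters'' also does not parse: the elements $x_1^{p^e},\ldots,x_n^{p^e}$ generate an $\mathfrak{m}$-primary ideal for every $e$, and whether $n$ of them can be a system of parameters depends only on whether $n=\dim R$, not on $e$.

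You do name the two standard routes in your closing paragraph (Kunz's length/multiplicity comparison $\ell(R/\mathfrak{m}^{[p^e]}) = p^{e\dim R}$ leading to $e(\mathfrak m) = 1$, or the Herzog argument via the intersection theorem), and those are indeed what is needed. But the Koszul shortcut you actually wrote out is circular; the reverse implication is not established by your sketch as it stands.
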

\noindent If Frobenius is a finite map and $R$ is regular local, the above theorem implies that $F_*R$ is a  free $R$-module. The freeness of $F_*R$ will be important when proving Theorem \ref{Theorem-A}.

%In prime characteristic commutative algebra, various notions of singularities are defined that all measure how far $R$ is from being a regular ring, in terms of the Frobenius map $F$. 

In geometry, finiteness of Frobenius is a mild restriction. For instance, Frobenius is a finite map for the localization of any finitely generated algebra over a perfect field of prime characteristic, and also for any complete local ring whose residue field $k$ satisfies $[k:k^p] < \infty$. A ring for which Frobenius is finite is called \textbf{$F$-finite}.

Kunz's theorem shows that when $R$ is regular local and $F$-finite, $F_*R$ has many free $R$-summands. For an arbitrary ring $R$, if $F_*R$ has at least one free $R$-summand, we say $R$ is \emph{Frobenius split}. More formally, $R$ is \textbf{Frobenius split} if $F: R \rightarrow F_*R$ has a left inverse, called a \textbf{Frobenius splitting}, in the category of $R$-modules. %Here $F_*R$ denotes the ring $R$ with $R$-module structure induced by restriction of scalars via $F$. 
When $R$ is reduced, Frobenius is an isomorphism onto its image $R^p$, and the existence of a Frobenius splitting is equivalent to the existence of an $R^p$-linear map $R \rightarrow R^p$ that maps $1 \mapsto 1$. %Here $R^p$ is the image of $R$ under $F$.

Weakening the notion of Frobenius splitting leads to \emph{$F$-purity}. We say that $R$ is \textbf{$F$-pure} when Frobenius is a pure map of $R$-modules. This means for any $R$-module $M$, the induced map $ F\otimes \textrm{id}_M: M \rightarrow F_*R \otimes_R M$ is injective. Regular rings are $F$-pure because Frobenius is flat hence faithfully flat for such rings, and faithful flatness implies purity. Also Frobenius splitting clearly implies $F$-purity, but the converse is false. For example, any non-excellent DVR of a function field over a perfect ground field is F-pure but not Frobenius split. See (\ref{flatness}) and (\ref{noetherian-Frobenius-splitting}) in Section \ref{Summary of F-singularities for valuation rings} for further discussion, and Example \ref{non-excellent-DVR} for a non-excellent DVR of a function field.
%Again when $R$ is regular, Frobenius is flat and so faithfully flat. In particular, Frobenius is then a \emph{pure} map of $R$-modules in the sense that for any $R$-module $M$, the induced map $M \rightarrow F_*R \otimes_R M$ is injective. 

%We say $R$ is \emph{$F$-pure} if the Frobenius map remains injective when tensored with any $R$-module, that is if the Frobenius map is a pure map of $R$-modules. Note Frobenius splitting implies $F$-purity, but the converse is not always true even for Noetherian regular local rings. For example, any non-excellent DVR of a function field over a perfect ground field is F-pure but not Frobenius split (see (1) and (7) in Section \ref{Summary of F-singularities for valuation rings} for further discussion).

%\vspace{1mm} 

\section{Proof of Theorem \ref{Theorem-A}}
\label{Proof of Theorem Theorem-A}
Unless otherwise specified, throughout this section we assume that $K$ is a finitely generated field extension of an $F$-finite ground field $k$, and $\nu$ is an Abhyankar valuation of $K/k$ whose residue field $\kappa_{\nu}$ is separable over $k$. \emph{We will prove more generally under these assumptions that the valuation ring $R_{\nu}$ is Frobenius split.} 

Also, we let 
$$\text{$d \coloneqq \dim_{\mathbb Q}(\mathbb{Q}\otimes_{\mathbb{Z}} \Gamma_{\nu})$ and $n \coloneqq \td{K/k}$.}$$
\noindent The goal is to choose a regular local center of ${\nu}$ satisfying some nice properties, and then extend a Frobenius splitting of this center to a Frobenius splitting of $R_{\nu}$. The center we seek is given by the following lemma.

\begin{lemma}
\label{regular-center}
Let $\nu$ be an Abhyankar valuation as in Theorem \ref{local-uniformization}. Then there exists a regular local ring $(A, \mathfrak{m}_A, \kappa_A)$ which is essentially of finite type over $k$ with fraction field $K$ satisfying the following properties:
\begin{enumerate}
\item $R_\nu$ is centered on $A$, and $\kappa_A \hookrightarrow \kappa_\nu$ is an isomorphism.
\item $A$ has Krull dimension $d$, and there exist a regular system of parameters $\{x_1, \dots, x_d\}$ of $A$ such that $\{\nu(x_1), \dots, \nu(x_d)\}$ freely generates the value group $\Gamma_\nu$.
\end{enumerate}
\end{lemma}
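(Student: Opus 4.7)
The plan is to feed a carefully chosen finite set $Z \subseteq R_\nu$ into the Knaf--Kuhlmann monomialization (Theorem~\ref{local-uniformization}), and then read off both conclusions of the lemma from the shape that the elements of $Z$ acquire in the resulting regular local ring. Since $\nu$ is Abhyankar, $\Gamma_\nu$ is a free abelian group of rank $d$, and $\kappa_\nu/k$ is a finitely generated separable extension of transcendence degree $n-d$. These two structural facts will let me prepare a set $Z$ that forces the regular center produced by Knaf--Kuhlmann to have exactly the residue field and value-group data I want.

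Concretely, first I would pick $y_1,\dots,y_d \in R_\nu$ whose values $\nu(y_1),\dots,\nu(y_d)$ form a $\mathbb{Z}$-basis of $\Gamma_\nu$. This is possible because $\Gamma_\nu$ is free of rank $d$ and totally ordered, so any basis can be replaced by one lying in the positive cone, and positive values are realized by elements of $R_\nu$. Next, using separability of $\kappa_\nu/k$, pick $z_1,\dots,z_{n-d} \in R_\nu$ whose residues form a separating transcendence basis of $\kappa_\nu/k$; then, by the primitive element theorem applied to the finite separable extension $k(\bar z_1,\dots,\bar z_{n-d}) \subseteq \kappa_\nu$, choose $w \in R_\nu$ whose residue generates $\kappa_\nu$ over $k(\bar z_1,\dots,\bar z_{n-d})$. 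I then apply Theorem~\ref{local-uniformization} to the finite set $Z = \{y_1,\dots,y_d, z_1,\dots,z_{n-d}, w\}$, obtaining a regular local $k$-algebra $(A,\mathfrak{m}_A,\kappa_A)$ essentially of finite type with fraction field $K$, on which $R_\nu$ is centered, of Krull dimension $d$, and equipped with a regular system of parameters $\{x_1,\dots,x_d\}$ such that every $z \in Z$ factors as a unit times a monomial in the $x_j$.

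For conclusion (2), I would observe that the factorizations $y_i = u_i x_1^{a_{i1}}\cdots x_d^{a_{id}}$ yield $\nu(y_i) = \sum_j a_{ij}\nu(x_j)$, so $\Gamma_\nu = \langle \nu(y_1),\dots,\nu(y_d)\rangle \subseteq \langle \nu(x_1),\dots,\nu(x_d)\rangle \subseteq \Gamma_\nu$. Hence $d$ elements generate the rank-$d$ free abelian group $\Gamma_\nu$, so they form a basis. For the residue field statement in (1), note that because $z_i$ and $w$ have nonzero residues in $\kappa_\nu$, each has valuation $0$; combined with their monomial factorizations and the fact that $\nu(x_1),\dots,\nu(x_d)$ are $\mathbb{Z}$-linearly independent, each of $z_1,\dots,z_{n-d},w$ must be a unit in $A$. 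Their residues therefore lie in $\kappa_A$, and by construction these residues already generate $\kappa_\nu$ over $k$, forcing the inclusion $\kappa_A \hookrightarrow \kappa_\nu$ to be surjective.

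The genuinely substantive input is Theorem~\ref{local-uniformization}; everything in my argument is bookkeeping around it. The one place that needs a little care is ensuring $\kappa_A = \kappa_\nu$ rather than just a finite separable subextension: this is precisely why the primitive element $w$ is thrown into $Z$. Without it, one only recovers the correct transcendence degree of $\kappa_A/k$ (via the dimension formula $\dim A + \td{\kappa_A/k} = \td{K/k}$ valid for local rings essentially of finite type over $k$), which would still leave room for a nontrivial finite separable gap $[\kappa_\nu:\kappa_A]$. Including $w$ closes that gap and gives the isomorphism asserted in (1).
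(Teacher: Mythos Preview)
Your proposal is correct and follows essentially the same approach as the paper: feed into Theorem~\ref{local-uniformization} a finite set $Z$ containing elements whose values generate $\Gamma_\nu$ and elements whose residues generate $\kappa_\nu$ over $k$, then read off both conclusions from the monomial factorizations. The only cosmetic difference is that the paper simply takes any finite set $s_1,\dots,s_t \in R_\nu$ whose images generate $\kappa_\nu$ over $k$ (finite generation being automatic for Abhyankar valuations), whereas you build this set as a separating transcendence basis plus a primitive element; your choice works but is not needed, and the paper's version avoids invoking separability at that step. On the other hand, you spell out explicitly why these residue-field witnesses become units in $A$ (valuation zero plus $\mathbb{Z}$-independence of the $\nu(x_j)$ forces all monomial exponents to vanish), a point the paper leaves to the reader.
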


\begin{proof}
Choose $r_1, \dots, r_d, s_1, \dots, s_t \in R_\nu$ such that $\{\nu(r_1), \dots, \nu(r_d)\}$ freely generates the value group $\Gamma_\nu$, and the images of $s_1, \dots, s_t$ in $\kappa_\nu$ generate the latter over $k$. Taking 
$$Z \coloneqq \{r_1, \dots, r_d, s_1, \dots, s_t\},$$
by local monomialization (Theorem \ref{local-uniformization}) there exists a regular local ring $(A, \mathfrak{m}_A, \kappa_A)$  essentially of finite type over $k$ with fraction field $K$ such that $R_\nu$ dominates $A$, $A$ has dimension $d$, $Z \subseteq A$, and there exists a regular system of parameters $\{x_1, \dots, x_d\}$ of $A$ such that every $z \in Z$ admits a factorization
$$z = ux^{a_1}_1\dots x^{a_d}_d,$$
for some $u \in A^{\times}$, and $a_i \in \mathbb{N} \cup \{0\}$.
In particular, each $\nu(r_i)$ is $\mathbb Z$-linear combination of $\nu(x_1), \dots, \nu(x_d)$, which implies that $\{\nu(x_1), \dots, \nu(x_d)\}$ also freely generates $\Gamma_\nu$. Moreover, our choice of $s_1, \dots, s_t$ implies that $\kappa_A = \kappa_\nu$.
\end{proof}

\begin{remark}
For a valuation $\nu$ of $K/k$, the existence of a center which is an essentially of finite type $k$-algebra of Krull dimension equal to $\dim_{\mathbb Q} (\mathbb{Q} \otimes_{\mathbb Z} \Gamma_\nu)$ implies that $\nu$ is Abhyankar. Thus, only Abhyankar valuations admit a center as in Lemma \ref{regular-center}.
\end{remark}

From now on $A$ will denote a choice of a regular local center of $\nu$ that satisfies Lemma \ref{regular-center}, and $\{x_1, \dots, x_d\}$ a regular system of parameters of $A$ whose valuations freely generate $\Gamma_\nu$. Observe that $A$ is $F$-finite since it is essentially of finite type over an $F$-finite field. Then Theorem \ref{Kunz-theorem} implies that $A$ is free over its $p$-th power subring $A^p$ of rank equal to $[K:K^p] = [k:k^p]p^n$. For 
$$f \coloneqq [\kappa_\nu:\kappa^p_\nu] = [k:k^p]p^{n-d},$$ 
if we choose 
$$1 = y_1,\hspace{1mm} y_2,\hspace{1mm} \dots,\hspace{1mm} y_f \in A,$$ 
such that the images of $y_i$ in $\kappa_A = \kappa_\nu$ form a basis of $\kappa_\nu$ over $\kappa_\nu^p$, then
$$\mathcal{B} \coloneqq \{y_jx_1^{\beta_1}\dots x_d^{\beta_d}: 1\leq j \leq f,\hspace{1mm} 0 \leq \beta_i \leq p-1\},$$
is a free basis of $A$ over $A^p$. Note the elements $y_j$ are units in $A$.

With respect to the basis $\mathcal B$, $A$ has a natural Frobenius splitting
$$\eta_{\mathcal B}: A \rightarrow A^p,$$
given by mapping $1 = y_1x^0_1\dots x^0_d \mapsto 1$, and all the other basis elements to $0$. Extending $\eta_{\mathcal B}$ uniquely to a $K^{p}$-linear map 
$$\widetilde{\eta_{\mathcal B}}: K \rightarrow K^p$$ 
of the fraction fields, we will show that the restriction of $\widetilde{\eta_{\mathcal B}}$ to $R_\nu$ yields a Frobenius splitting of $R_\nu$, or in other words, $\widetilde{\eta_{\mathcal B}}|_{R_\nu}$ maps into $R^p_\nu$.

\begin{claim}
\label{main-claim}
For any $a \in A$, either $\eta_{\mathcal B}(a) = 0$ or $\nu(\eta_{\mathcal{B}}(a)) \geq \nu(a)$.
\end{claim}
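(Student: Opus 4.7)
The plan is to expand $a$ in the free $A^p$-basis $\mathcal{B}$ as
$$a = \sum_{\beta \in \{0,\ldots,p-1\}^d} a_\beta\, x^\beta, \qquad a_\beta := \sum_{j=1}^f c_{j,\beta}^p\, y_j,$$
with $c_{j,\beta} \in A$ and $x^\beta := x_1^{\beta_1}\cdots x_d^{\beta_d}$. By construction $\eta_{\mathcal B}(a) = c_{1,0}^p$, so we may assume $c_{1,0}\ne 0$, as otherwise the claim is vacuous. The strategy is to prove that
$$\nu(a) \;=\; \min_\beta \nu(a_\beta\, x^\beta),$$
which, together with $\nu(y_j)=0$ (the $y_j$ being units of $A\subseteq R_\nu$), immediately yields
$$\nu(a) \;\le\; \nu(a_0) \;\le\; \nu\bigl(c_{1,0}^p\, y_1\bigr) \;=\; p\,\nu(c_{1,0}) \;=\; \nu\bigl(\eta_{\mathcal B}(a)\bigr).$$

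The first substep is to show that if $a_\beta \ne 0$, then $\nu(a_\beta) = \min\{p\nu(c_{j,\beta}) : c_{j,\beta}\ne 0\}$ and hence lies in $p\Gamma_\nu$. Picking an index $j_0$ realizing this minimum, the quotient $a_\beta/c_{j_0,\beta}^p = \sum_j (c_{j,\beta}/c_{j_0,\beta})^p\, y_j$ belongs to $R_\nu$, and its image in $\kappa_\nu$ is a $\kappa_\nu^p$-linear combination of the basis $\{\bar y_j\}$ of $\kappa_\nu$ over $\kappa_\nu^p$ whose coefficient of $\bar y_{j_0}$ is $1$. This image is therefore nonzero, so the quotient is a unit of $R_\nu$ and $\nu(a_\beta)=p\nu(c_{j_0,\beta})$.

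The second substep is that for distinct $\beta,\beta' \in \{0,\ldots,p-1\}^d$ with $a_\beta,a_{\beta'}\ne 0$, the values $\nu(a_\beta x^\beta)$ and $\nu(a_{\beta'} x^{\beta'})$ lie in different cosets of $p\Gamma_\nu$: their difference modulo $p\Gamma_\nu$ reduces, by the previous substep, to $\sum_i (\beta_i-\beta_i')\nu(x_i)$, and since $\{\nu(x_1),\ldots,\nu(x_d)\}$ freely generates $\Gamma_\nu$, its image is an $\mathbb F_p$-basis of $\Gamma_\nu/p\Gamma_\nu \cong (\mathbb{Z}/p\mathbb{Z})^d$. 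Consequently the nonzero terms $a_\beta x^\beta$ have pairwise distinct valuations, and the standard strictness of a valuation on such sums gives $\nu(a)=\min_\beta\nu(a_\beta x^\beta)$.

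The main obstacle is the first substep: one must simultaneously exploit that $\nu(y_j)=0$ and that $\{\bar y_j\}$ is a $\kappa_\nu^p$-basis of $\kappa_\nu$ to pin the valuation of $a_\beta$ inside $p\Gamma_\nu$; the coset-separation in the second substep then rests on the $\mathbb Z$-freeness of $\{\nu(x_i)\}$, which is precisely the feature of the center $A$ arranged by Lemma \ref{regular-center}.
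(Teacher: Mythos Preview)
Your proof is correct and follows essentially the same approach as the paper: both arguments use the $\kappa_\nu^p$-linear independence of the $\bar y_j$ to control sums of terms with the same exponent vector $\beta$, and the $\mathbb{Z}$-freeness of $\{\nu(x_1),\dots,\nu(x_d)\}$ to separate terms with different $\beta$. Your organization---first grouping by $\beta$ to form $a_\beta$, showing $\nu(a_\beta)\in p\Gamma_\nu$, and then using the coset language in $\Gamma_\nu/p\Gamma_\nu$ for the separation step---is a slightly cleaner repackaging of the paper's proof, which instead analyzes pairs of individual terms and then groups those of equal valuation, but the content is the same.
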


Theorem \ref{Theorem-A} follows easily from the claim using the following general observation. 

\begin{lemma}
\label{criterion-for-F-splitting-valuation-rings}
Let $\nu$ be a valuation of a field $K$ of characteristic $p > 0$ with valuation ring $R_\nu$, and $A$ a subring of $R_\nu$ such that $\Frac(A) = K$. Suppose $\varphi: A \rightarrow A^{p^e}$ is an $A^{p^e}$-linear map, for some $e \geq 1$. Consider the following:
\begin{enumerate}
\item[(i)] For all $a \in A$, \textrm{$\varphi(a) = 0$ or $\nu(\varphi(a)) \geq \nu(a)$}.
\item[(ii)] For all $a, b \in A$ such that $\nu(a) \geq \nu(b)$, if $\varphi(ab^{p^e -1}) \neq 0$, then $\nu(\varphi(ab^{p^e-1})) \geq \nu(b^{p^e})$.
\item[(iii)] $\varphi$ extends to an $R^{p^e}_\nu$-linear map $R_\nu \rightarrow R^{p^e}_\nu$.
\item[(iv)] $\varphi$ extends uniquely to an $R^{p^e}_\nu$-linear map $R_\nu \rightarrow R^{p^e}_\nu$.
\end{enumerate}
Then (ii), (iii) and (iv) are equivalent, and (i) $\Rightarrow$ (ii). Moreover, if $\varphi$ is a Frobenius splitting of $A$ satisfying (i) or (ii), then $\varphi$ extends to a Frobenius splitting of $R_\nu$.
%If $\varphi$ is a Frobenius splitting of $A$ satisfying (ii), then $\varphi$ extends to a Frobenius splitting of $R_v$.
\end{lemma}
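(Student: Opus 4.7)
The plan is to exploit the $A^{p^e}$-linearity of $\varphi$ to pin down the only possible extension, via the identity
$$\frac{a}{b} = \frac{ab^{p^e-1}}{b^{p^e}}, \qquad b^{p^e} \in A^{p^e},$$
and then translate each hypothesis into a statement about the resulting formula.

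First I would dispatch (i) $\Rightarrow$ (ii). Given $a, b \in A$ with $\nu(a) \geq \nu(b)$, one has $\nu(ab^{p^e-1}) = \nu(a) + (p^e-1)\nu(b) \geq p^e\nu(b) = \nu(b^{p^e})$, so applying (i) to $ab^{p^e-1}$ yields (ii) whenever $\varphi(ab^{p^e-1}) \neq 0$.

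For (ii) $\Rightarrow$ (iv), note that every element of $R_\nu$ has the form $a/b$ with $a, b \in A$, $b \neq 0$, and $\nu(a) \geq \nu(b)$, since $\Frac(A) = K$. Any $R^{p^e}_\nu$-linear extension $\psi$ of $\varphi$ must satisfy $b^{p^e}\psi(a/b) = \psi(ab^{p^e-1}) = \varphi(ab^{p^e-1})$, forcing
$$\psi(a/b) = \varphi(ab^{p^e-1})/b^{p^e},$$
which gives uniqueness once existence is established. For existence I would define $\tilde\varphi$ by this formula and verify well-definedness: if $ab' = a'b$ in $A$, then the identity $(b')^{p^e} \cdot ab^{p^e-1} = b^{p^e} \cdot a'(b')^{p^e-1}$ holds in $A$, and applying $\varphi$ with $A^{p^e}$-linearity gives $(b')^{p^e}\varphi(ab^{p^e-1}) = b^{p^e}\varphi(a'(b')^{p^e-1})$, as required. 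The map is $R^{p^e}_\nu$-linear by construction. To see that its image lies in $R^{p^e}_\nu$ rather than merely in $K^{p^e}$, I would invoke the elementary identity $K^{p^e} \cap R_\nu = R^{p^e}_\nu$ (if $y^{p^e} \in R_\nu$ then $p^e\nu(y) \geq 0$, so $y \in R_\nu$), combined with the valuation inequality $\nu(\tilde\varphi(a/b)) = \nu(\varphi(ab^{p^e-1})) - \nu(b^{p^e}) \geq 0$ furnished by (ii) whenever $\tilde\varphi(a/b) \neq 0$.

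The implication (iv) $\Rightarrow$ (iii) is trivial, and (iii) $\Rightarrow$ (ii) runs the previous paragraph in reverse: any $R^{p^e}_\nu$-linear extension sends $a/b$ to $\varphi(ab^{p^e-1})/b^{p^e} \in R^{p^e}_\nu$, which rearranges to the valuation inequality in (ii). For the final assertion, if $\varphi(1) = 1$ then $\tilde\varphi(1) = \varphi(1 \cdot 1^{p^e-1})/1^{p^e} = 1$, so the extension is automatically a Frobenius splitting of $R_\nu$. I do not anticipate a substantive obstacle: the proof is a formal unwinding of $R^{p^e}_\nu$-linearity together with the elementary behavior of $p^e$-th powers under $\nu$. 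The only minor subtlety is separating the tautological $K^{p^e}$-linear extension to $K$ from the integrality statement that its restriction to $R_\nu$ lands in $R^{p^e}_\nu$, which is precisely the content of (ii).
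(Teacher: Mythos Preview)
Your proof is correct and follows essentially the same route as the paper's: both hinge on the formula $\tilde\varphi(a/b) = \varphi(ab^{p^e-1})/b^{p^e}$ and the identity $K^{p^e}\cap R_\nu = R_\nu^{p^e}$, with the valuation inequality in (ii) supplying exactly the integrality needed. The only cosmetic differences are that the paper proves the cycle in the order (ii) $\Rightarrow$ (iii) $\Rightarrow$ (iv) $\Rightarrow$ (ii) and obtains well-definedness by first extending $\varphi$ uniquely to a $K^{p^e}$-linear map $K\to K^{p^e}$ and then restricting, whereas you go (ii) $\Rightarrow$ (iv) $\Rightarrow$ (iii) $\Rightarrow$ (ii) and check well-definedness by hand.
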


\begin{proof}
For the final assertion on Frobenius splitting, note that the extension of a Frobenius splitting remains a Frobenius splitting since $1 \mapsto 1$ in the extension.

(i) $\Rightarrow$ (ii): If $\varphi(ab^{p^e -1}) \neq 0$, we have 
$$\nu(\varphi(ab^{p^e-1})) \geq \nu(ab^{p^e-1}) \geq \nu(b^{p^e}),$$
where the first inequality follows from (i), and the second inequality follows from $\nu(a) \geq \nu(b)$.

(ii) $\Rightarrow$ (iii): Extending $\varphi$ to a $K^{p^e}$-linear map $\widetilde{\varphi}: K \rightarrow K^p$, it suffices to show that $\widetilde{\varphi}|_{R_\nu}$ maps into $R^{p^e}_\nu$. Let $r \in R_\nu$ be a non-zero element. Since $K$ is the fraction field of $A$ and $R_\nu$, one can express $r$ as a fraction $a/b$, for non-zero $a, b \in A$. Note  
$$ 
\nu(a) \geq \nu(b).
$$
Then 
\begin{equation}
\label{main-equation}
\widetilde{\varphi}(r) = \widetilde{\varphi}\bigg{(}\frac{a}{b}\bigg{)} = \frac{1}{b^{p^e}}\varphi(ab^{p^e-1}).
\end{equation}
If $\varphi(ab^{p^e-1}) = 0$, then $\widetilde{\varphi}(r) = 0$, and $r$ maps into $R^{p^e}_\nu$. Otherwise by assumption,
$$\nu(\varphi(ab^{p^e-1})) \geq \nu(b^{p^e}),$$
and so, 
$$\nu(\widetilde{\varphi}(r)) = \nu(\varphi(ab^{p^e-1})) - \nu(b^{p^e}) \geq 0,$$
that is $\widetilde{\varphi}(r)$ is an element of $K^{p^e} \cap R_\nu = R^{p^e}_\nu$. 

(iii) $\Rightarrow$ (iv): Since $A$ and $R_\nu$ have the same fraction field, any extension of $\varphi$ to $R_\nu$ is obtained as a restriction to $R_\nu$ of the unique extension of $\varphi$ to a $K^{p^e}$-linear map $\widetilde{\varphi}: K \rightarrow K^{p^e}$, and so is also unique. See (\ref{main-equation}) above for a concrete description of how $\varphi$ extends to $R_\nu$. 

To finish the proof of the lemma, it suffices to show (iv) $\Rightarrow$ (ii). But this also follows easily from (\ref{main-equation}).
\end{proof}

\begin{proof}[\textbf{Proof of Claim \ref{main-claim}}]
Recall that 
$$\mathcal{B} = \{y_jx_1^{\beta_1}\dots x_d^{\beta_d}: 1\leq j\leq f, \hspace{1mm} 0 \leq \beta_i \leq p-1\}$$ 
is a basis of $A$ over $A^p$, where the $x_i$ and $y_j$ are chosen such that $\{\nu(x_1), \dots, \nu(x_d)\}$ freely generates the value group $\Gamma_\nu$, and the images of $1 = y_1, \hspace{1mm} y_2, \hspace{1mm} \dots,\hspace{1mm} y_f$ in $\kappa_\nu$ form a basis of $\kappa_\nu$ over $\kappa^p_\nu$. The $A^p$-linear Frobenius splitting $\eta_{\mathcal B}$ is given by
$$\eta_{\mathcal B}\bigg{(}\sum_{j = 1}^f \sum_{0 \leq \beta_i \leq p-1}c^p_{j, \beta_1, \dots, \beta_d}y_jx_1^{\beta_1}\dots x_d^{\beta_d} \bigg{)} = c^p_{1,0, 0, \dots, 0}.$$
Thus, we need to show that either $c^p_{1,0, 0, \dots, 0} = 0$ or
$$\nu(c^p_{1,0, 0, \dots, 0}) \geq \nu\bigg{(}\sum_{j = 1}^f \sum_{0 \leq \beta_i \leq p-1}c^p_{j, \beta_1, \dots, \beta_d}y_jx_1^{\beta_1}\dots x_d^{\beta_d} \bigg{)}.$$
Assuming without loss of generality that $\sum_{j = 1}^f \sum_{0 \leq \beta_i \leq p-1}c^p_{j, \beta_1, \dots, \beta_d}y_jx_1^{\beta_1}\dots x_d^{\beta_d} \neq 0$, we will prove the stronger fact that
\begin{equation}
\label{inf-eq}
\nu\bigg{(}\sum_{j = 1}^f \sum_{0 \leq \beta_i \leq p-1}c^p_{j, \beta_1, \dots, \beta_d}y_jx_1^{\beta_1}\dots x_d^{\beta_d} \bigg{)} = \inf\{\nu(c^p_{j,\beta_1,\dots,\beta_d}y_jx^{\beta_1}_1\dots x^{\beta_d}_d): c^p_{j,\beta_1,\dots,\beta_d} \neq 0\}.
\end{equation}

For two non-zero terms $c^p_{j,\alpha_1,\dots,\alpha_d}y_jx_1^{\alpha_1}\dots x_d^{\alpha_d}$ and $c^p_{k,\beta_1,\dots,\beta_d}y_kx^{\beta_1}_1\dots x^{\beta_d}_d$ in the above sum,
\begin{equation}
\label{equation}
\nu(c^p_{j,\alpha_1,\dots,\alpha_d}y_jx_1^{\alpha_1}\dots x_d^{\alpha_d}) = \nu(c^p_{k,\beta_1,\dots,\beta_d}y_kx^{\beta_1}_1\dots x^{\beta_d}_d)
\end{equation}
if and only if 
\begin{equation}
\label{another-equality}
p\nu(c_{j,\alpha_1,\dots,\alpha_d}) + \alpha_1\nu(x_1) + \dots + \alpha_d\nu(x_d) = p\nu(c_{k, \beta_1,\dots,\beta_d}) + \beta_1\nu(x_1) + \dots + \beta_d\nu(x_d).
\end{equation}
By $\mathbb Z$-linear independence of $\nu(x_1),\hspace{1mm} \dots,\hspace{1mm} \nu(x_d)$, for all $i = 1, \dots, d$, we get 
$$p|(\alpha_i - \beta_i).$$ 
Since $0 \leq \alpha_i, \beta_i \leq p-1$, this means that $\alpha_i = \beta_i$ for all $i$, and moreover, then $\nu(c^p_{j,\alpha_1,\dots,\alpha_d}) = \nu(c^p_{k,\beta_1,\dots,\beta_d})$. Thus, (\ref{equation}) holds precisely when $\nu(c^p_{j,\alpha_1,\dots,\alpha_d}) = \nu(c^p_{k, \beta_1,\dots,\beta_d})$ and $\alpha_i = \beta_i$, for all $i = 1, \dots, d$.

For ease of notation, let us use $\underline{\alpha}$ as a shorthand for $\alpha_1, \dots, \alpha_d$, and $\underline{x}^{\underline{\alpha}}$ for $x^{\alpha_1}_1\dots x^{\alpha_d}_d$. Then for a fixed non-zero term $c^p_{j_1,\underline{\alpha}}y_{j_1}\underline{x}^{\underline{\alpha}}$, consider the set 
$$\{c^p_{j_1,\underline{\alpha}}y_{j_1}\underline{x}^{\underline{\alpha}}, \hspace{1mm} c^p_{j_2,\underline{\alpha}}y_{j_2} \underline{x}^{\underline{\alpha}}, \dots, \hspace{1mm} c^p_{j_i, \underline{\alpha}}y_{j_i}\underline{x}^{\underline{\alpha}}\}$$ 
of all non-zero terms of $\sum_{j = 1}^f \sum_{0 \leq \beta_i \leq p-1}c^p_{j, \beta_1, \dots, \beta_d}y_jx_1^{\beta_1}\dots x_d^{\beta_d}$
having the same valuation as $c^p_{j_1,\underline{\alpha}}y_{j_1}\underline{x}^{\underline{\alpha}}$. In particular, by the above reasoning we also have
$$\nu(c^p_{j_1, \underline{\alpha}}) = \nu(c^p_{j_2,\underline{\alpha}}) = \dots = \nu(c^p_{j_i,\underline{\alpha}}).$$ Adding these terms of equal valuation, in the valuation ring $R_\nu$ one can write
\begin{align*}
c^p_{j_1,\underline{\alpha}}y_{j_1}\underline{x}^{\underline{\alpha}} + c^p_{j_2,\underline{\alpha}}y_{j_2} \underline{x}^{\underline{\alpha}}+ \dots + c^p_{j_i,\underline{\alpha}}y_{j_i}\underline{x}^{\underline{\alpha}} =\\ 
\bigg{(}y_{j_1} + \bigg{(}\frac{c_{j_2,\underline{\alpha}}}{c_{j_1,\underline{\alpha}}}\bigg{)}^py_{j_2} + \dots + \bigg{(}\frac{c_{j_i,\underline{\alpha}}}{c_{j_1,\underline{\alpha}}}\bigg{)}^py_{j_i}\bigg{)}c_{j_1,\underline{\alpha}}^p\underline{x}^{\underline{\alpha}},
\end{align*}
where 
$$y_{j_1} + \bigg{(}\frac{c_{j_2,\underline{\alpha}}}{c_{j_1,\underline{\alpha}}}\bigg{)}^py_{j_2} + \dots + \bigg{(}\frac{c_{j_i,\underline{\alpha}}}{c_{j_1,\underline{\alpha}}}\bigg{)}^py_{j_i}$$
is a unit in $R_\nu$ by the $\kappa^p_\nu$-linear independence of the images of $y_{j_1}, \hspace{1mm} \dots, \hspace{1mm} y_{j_i}$ in $\kappa_\nu$ and the fact that $({c_{j_2,\underline{\alpha}}}/{c_{j_1,\underline{\alpha}}})^p, \dots, ({c_{j_i,\underline{\alpha}}}/{c_{j_1,\underline{\alpha}}})^p$ are units in $R_\nu^p$. Thus, the valuation of the sum 
$$c^p_{j_1,\underline{\alpha}}y_{j_1}\underline{x}^{\underline{\alpha}} + \dots + c^p_{j_i,\underline{\alpha}}y_{j_i}\underline{x}^{\underline{\alpha}}$$ 
equals the valuation of any of its terms. Now rewriting $\sum_{j = 1}^f \sum_{0 \leq \beta_i \leq p-1}c^p_{j, \beta_1, \dots, \beta_d}y_jx_1^{\beta_1}\dots x_d^{\beta_d}$ by collecting non-zero terms having the same valuation, (\ref{inf-eq}), hence also the claim, follows.
%\begin{align*}
%v\bigg{(}\sum_{j = 1}^f \sum_{0 \leq \alpha_i \leq p-1}c^p_{j, \alpha_1, \dots, \alpha_d}y_jx_1^{\alpha_1}\dots x_d^{\alpha_d}\bigg{)} &= \inf\{v(c^p_{j,\alpha_1,\dots,\alpha_d}y_jx^{\alpha_1}_1\dots x^{\alpha_d}_d): c^p_{j,\alpha_1,\dots,\alpha_d} \neq 0\}\\ 
%\leq v(c^p_{1,0, 0, \dots, 0}) &= v\bigg{(}\eta_{\mathcal B}\bigg{(}\sum_{j = 1}^f \sum_{0 \leq \alpha_i \leq p-1}c^p_{j, \alpha_1, \dots, \alpha_d}y_jx_1^{\alpha_1}\dots x_d^{\alpha_d}\bigg{)}\bigg{)}. \qedhere
%\end{align*}
\end{proof}

%\begin{remark}
%Our proof of Frobenius splitting uses the existence of a regular local center satisfying Lemma \ref{regular-center}. However, the existence of a regular local center of Krull dimension equal to $\dim_{\mathbb Q}(\mathbb{Q} \otimes_{\mathbb Z} \Gamma_v)$, for a valuation $v$ of $K/k$, already implies by the fundamental inequality \ref{Abh-ineq} that $v$ is Abhyankar. Thus, our proof does not immediately generalize to give Frobenius splitting of possibly non Abhyankar valuation rings.
%\end{remark}

%\begin{remark}
%In order to settle Conjecture \ref{conj-C} one does not need the full strength of Theorem \ref{local-uniformization}. As the proof of Theorem \ref{Theorem-A} shows, the existence of a regular local center satisfying Lemma \ref{regular-center} is sufficient to deduce Frobenius splitting of Abhyankar valuations of F-finite function fields.
%\end{remark}

\vspace{2mm}

\begin{examples}
\label{some-examples}
{\*}
\item (a) A valuation ring of a function field of a curve over an $F$-finite ground field is always Frobenius split. Indeed, such a valuation ring is always centered on some normal affine model of dimension $1$ of the function field, and so is an $F$-finite DVR.

\item (b) For a positive integer $n$, consider $\mathbb Z^{\oplus n}$ with the lexicographical order. That is, if $\{e_1, \dots, e_n\}$ denotes the standard basis of $\mathbb Z^{\oplus n}$, then $e_1 > e_2 > \dots > e_n$. There exists a unique valuation $\nu_{lex}$ on $\mathbb{F}_p(x_1, \dots, x_n)/\mathbb{F}_p$ such that for all $i \in \{1, \dots, n\}$,
$$\nu_{lex}(x_i) = e_i.$$
The valuation $\nu_{lex}$ is clearly Abhyakar since $\dim_{\mathbb{Q}}(\mathbb{Q} \otimes_{\mathbb{Z}} \mathbb{Z}^{\oplus n}) = n$, which coincides with the transcendence degree of $\mathbb{F}_p(x_1, \dots, x_n)/\mathbb F_p$. One can also show that the valuation ring $R_{\nu_{lex}}$ has Krull dimension $n$ and residue field $\mathbb{F}_p$. The valuation is centered on the regular local ring $\mathbb{F}_p[x_1,\dots,x_n]_{(x_1,\dots, x_n)}$ such that the valuations of the obvious regular system of parameters freely generate $\mathbb{Z}^{\oplus n}$ and the residue field coincides with the residue field of $\nu_{lex}$. Then a Frobenius splitting of $R_{\nu_{lex}} \rightarrow R^p_{\nu_{lex}}$ is obtained by extending the canonical splitting on $\mathbb{F}_p[x_1,\dots,x_n]_{(x_1,\dots,x_n)}$ with respect to the basis 
$$\{x_1^{\beta_1}\dots x_n^{\beta}: 0 \leq \beta_i \leq p-1\}.$$
This splitting of $\mathbb{F}_p[x_1,\dots,x_n]_{(x_1,\dots,x_n)}$ maps
\[   
 x_1^{\alpha_1}\dots x_n^{\alpha_n} \mapsto 
     \begin{cases}
       x_1^{\alpha_1}\dots x_n^{\alpha_n} &\quad\text{if $p|\alpha_i$ for all $i$,}\\
       0 &\quad\text{otherwise.} \\ 
     \end{cases}
\]

\item (c) Let $\Gamma = \mathbb{Z} \oplus \mathbb{Z}\pi \subset \mathbb{R}$. There exists a valuation $\nu$ of $\mathbb{F}_p(x,y,z)/\mathbb{F}_p$ given by
$$\nu(x) = \nu(y) = 1, \nu(z) = \pi.$$
Then $\dim_{\mathbb Q} (\mathbb{Q} \otimes_{\mathbb Z} \Gamma) = 2$, and the transcendence degree of the residue field $\kappa_\nu/\mathbb{F}_p$ is at least $1$ since the image of $y/x$ in the residue field is transcendental over $\mathbb{F}_p$. Therefore the fundamental inequality (\ref{Abh-ineq}) implies that $\nu$ is Abhyankar. Although the valuation $\nu$ is centered on the regular local ring $\mathbb{F}_p[x,y,z]_{(x,y,z)}$, no regular system of parameters can freely generate the value group because the center has dimension $3$, whereas the value group is free of rank $2$. However, blowing up the origin in $\mathbb{A}^3_{\mathbb F_p}$, we see that $\nu$ is now centered on the regular local ring $\mathbb{F}_p[x, y/x, z/x]_{(x, z/x)}$, and the valuations of the regular system of paramaters $\{x, z/x\}$ freely generate $\Gamma$. Furthermore, the residue field of $\mathbb{F}_p[x, y/x, z/x]_{(x, z/x)}$ can be checked to coincide with the residue field of the valuation ring. Relabelling $y/x$ and $z/x$ as $u, w$ respectively, a Frobenius splitting on $R_\nu$ is obtained by extending the Frobenius splitting of $\mathbb{F}_p[x, u, w]_{(x, w)}$ given by the same rule as in (a) with respect to the transcendental elements $x, u, w$ over $\mathbb F_p$.
\end{examples}

\section{Valuations centered on Noetherian, local domains}
\label{Noetherian, F-finite centers}

The proof of \cite[Theorem 5.1]{DS16} shows that a valuation $\nu$ of an $F$-finite function field $K/k$ of characteristic $p$ is Abhyankar precisely when
\begin{equation}
\label{function-field-defectless}
[\Gamma_\nu:p\Gamma_\nu][\kappa_\nu:\kappa^p_\nu] = [K:K^p].
\end{equation}
If $\nu^p$ denotes the restriction of $\nu$ to the subfield $K^p$ of $K$, then the value group of $\nu^p$ is easily verified to be $p\Gamma_\nu$, and the residue field $\kappa_{\nu^p}$ can be identified with $\kappa_\nu^p$. Thus, $[\Gamma_\nu:p\Gamma_\nu]$ is the \emph{ramification index}, and $[\kappa_\nu:\kappa_\nu^p]$ the \emph{residue degree} of the extension of valuations $\nu/\nu^p$ \cite[Remark 4.3.3]{DS16}. Note $\nu$ is the \emph{unqiue} extension of $\nu^p$ to $K$ since $K$ is a purely inseparable extension of $K^p$.

In terms of the theory of extensions of valuations, (\ref{function-field-defectless}) can be reinterpreted as saying that a necessary and sufficient condition for $\nu$ to be Abhyankar is for the unique extension of valuations $\nu/\nu^p$ to be \emph{defectless} (see \cite[Page 281]{FV11} for the definition of defect). There is a natural generalization of the notion of an Abhyankar valuation for valuations of arbitrary fields. The goal of this section is to introduce this more general notion, and investigate to what extent such valuations can be characterized in terms of the defect of $\nu/\nu^p$.

We fix some notation. Let $K$ denote a field of characteristic $p > 0$ (not necessarily a function field), and $\nu$ a valuation of $K$ with valuation ring $(V, \mathfrak{m}_{\nu}, \kappa_\nu)$ centered on a Noetherian local ring $(R, \mathfrak{m}_R, \kappa_R)$ such that
$$\dim(R) < \infty.$$ 
Recall that centers, by convention, always have the same fraction field as the valuation ring. Let $\Gamma_\nu$ be the value group of $\nu$.

Abhyankar greatly generalized (\ref{Abh-ineq}) in \cite[Theorem 1]{Abh56}, establishing that in the above setup, 
\begin{equation}
\label{more-general-Abhyankar-ineq}
\dim_{\mathbb Q}(\mathbb Q \otimes_{\mathbb Z} \Gamma_\nu) + \td{\kappa_\nu/\kappa_R} \leq \dim(R).
\end{equation}
Moreover, he showed that if equality holds in the above inequality, then $\Gamma_\nu$ is a free abelian group and $\kappa_\nu$ is a finitely generated extension of $\kappa_R$. %We want to emphasize here that \ref{more-general-Abhyankar-ineq} holds for \emph{any} local center $R$ of finite Krull dimension, and not just the Noetherian ones.

When $K/k$ is a function field, there is a close relationship between Abhyankar valuations of $K/k$, and those valuations of $K/k$ that admit a Noetherian center with respect to which equality holds in (\ref{more-general-Abhyankar-ineq}). Indeed, if equality holds in (\ref{more-general-Abhyankar-ineq}) for an arbitrary valuation $\nu$ of $K/k$ with respect to a center $R$ which is essentially of finite type over $k$, then $\nu$ is an Abhyankar valuation of $K/k$. To see this, let $n = \td{K/k}$. Then 
$$\td{\kappa_R/k} = n-\dim(R),$$ 
because $R$ is essentially of finite type over $k$ with fraction field $K$, and so 
$$\td{\kappa_\nu/k} = \td{\kappa_\nu/\kappa_R} + n - \dim(R).$$
This implies  
\begin{align*}
\dim_{\mathbb{Q}}(\mathbb{Q} \otimes_{\mathbb{Z}} \Gamma_\nu)  + \td{\kappa_\nu/k} & = (\dim_{\mathbb{Q}}(\mathbb{Q} \otimes_{\mathbb{Z}} \Gamma_\nu)  +  \td{\kappa_\nu/\kappa_R})  +  n  -  \dim(R) \\
&= \dim(R)  +  n  -  \dim(R) = \td{K/k}.
\end{align*}
Conversely, a similar reasoning shows that if $\nu$ is an Abhyankar valuation of $K/k$, then $\nu$ satisfies equality in (\ref{more-general-Abhyankar-ineq}) with respect to any center which is essentially of finite type over $k$.

However, despite the similarity between (\ref{more-general-Abhyankar-ineq}) and (\ref{Abh-ineq}), whether a valuation satisfies equality in (\ref{more-general-Abhyankar-ineq}) is not an intrinsic property of the valuation, but also depends on the center $R$. In contrast, the property of being an Abhyankar valuation is intrinsic to valuations of function fields. To better illustrate this difference, we construct a valuation of $\mathbb{F}_p(X,Y)$ with two different Noetherian centers such that equality in (\ref{more-general-Abhyankar-ineq}) is satisfied with respect to one center, but not the other. In our example we work over a base field of characteristic $p > 0$, but the construction goes through when the ground field has characteristic $0$.

\begin{example}(see also \cite[Example 4.0.5]{DS16})
\label{non-excellent-DVR}
Consider the laurent series field $\mathbb{F}_p((t))$ with the canonical $t$-adic valuation, $\nu_{t}$, whose corresponding valuation ring is the DVR $\mathbb{F}_p[[t]]$. Choose an embedding of fields
$$i: \mathbb{F}_p(X,Y) \hookrightarrow \mathbb{F}_p((t))$$
by mapping $X \mapsto t$ and $Y \mapsto p(t)$, where 
$$p(t) \in \mathbb{F}_p[[t]]$$ 
such that $\{t, p(t)\}$ are algebraically independent over $\mathbb{F}_p$. Such a power-series exists because  $\mathbb{F}_p((t))$ is uncountable, but $\mathbb{F}_p(t)$ is countable. Moreover, multiplying $p(t)$ by $t$, we may even assume that $t|p(t)$. Then we get a new valuation $\nu$ on $\mathbb{F}_p(X,Y)$, given by the composition
$$\nu \coloneqq \mathbb{F}_p(X,Y)^{\times} \xrightarrow{i} \mathbb{F}_p((t))^{\times} \xrightarrow{\nu_t} \mathbb{Z}.$$
The corresponding valuation ring $V$ is a DVR with maximal ideal generated by $X$. Since 
$$\nu(X) = \nu_t(t), \hspace{1mm} \nu(Y) = \nu_t(p(t)) \geq 1,$$ 
($p(t)$ was scaled so that $t|p(t)$), we see that $\nu$ is centered on $\mathbb{F}_p[X,Y]_{(X,Y)}$. Furthermore, $\nu$ is also trivially centered on its own valuation ring. As $\mathbb{F}_p[[t]]$ dominates $V$ and has residue field $\mathbb F_p$, 
$$\kappa_\nu = \mathbb{F}_p.$$ 
Clearly $\nu$ satisfies equality in (\ref{more-general-Abhyankar-ineq}) with respect to its valuation ring as a center (this is true more generally for any discrete valuation), but not with respect to the center $\mathbb{F}_p[X,Y]_{(X,Y)}$. Note that $\nu$ is \emph{not} an Abhyankar valuation of $\mathbb{F}_p(X,Y)/\mathbb{F}_p$, since
$$\dim_{\mathbb Q}(\mathbb{Q} \otimes_{\mathbb Z} \mathbb{Z}) + \td{\kappa_\nu/\mathbb{F}_p} = 1 \neq  \td{\mathbb{F}_p(X,Y)/\mathbb{F}_p}.$$
Moreover, the valuation ring of $\nu$ is \emph{not} an $F$-finite DVR. This follows from results stated in the next section, but we include the justification here. Indeed, since the maximal ideal $\mathfrak{m}$ of $V$ is principal, by [Section \ref{Summary of F-singularities for valuation rings}, (\ref{assuming-f-finiteness})(i)]
$$\dim_{\kappa^p_\nu}(V/\mathfrak{m}^{[p]}) = p[\kappa_\nu:\kappa^p_\nu] = p \neq  [\mathbb{F}_p(X,Y): \mathbb{F}_p(X,Y)^p],$$
and so $V$ is not $F$-finite by [Section \ref{Summary of F-singularities for valuation rings},  (\ref{general-f-finiteness})]. It turns out that $V$ is also not excellent [Section \ref{Summary of F-singularities for valuation rings}, (\ref{noetherian-Frobenius-splitting})].
\end{example}

Given the example above, we make the following definition.

\begin{definition}
\label{Abhyankar-center}
Let $\nu$ be a valuation centered on a Noetherian local domain $R$. We say $R$ is an \textbf{Abhyankar center} of $\nu$ if
$$\dim_{\mathbb{Q}}(\mathbb{Q} \otimes_{\mathbb{Z}} \Gamma_\nu) + \td{\kappa_\nu/\kappa_R} = \dim(R).$$
\end{definition}  

To summarize our observations, the property of being an Abhyankar valuation of a function field is intrinsic to a valuation, while whether $\nu$ admits Abhyankar centers depends on the centers. However, if additional restrictions are imposed on the class of centers (for instance, if we require centers to be essentially of finite type over $k$), then the property of possessing these more restrictive Abhyankar centers becomes intrinsic to $\nu$.
%To summarize our observations so far,  if $K/k$ is a function field, the collection of Abhyankar valuations of $K/k$ is contained in the collection of valuations of $K/k$ admitting an Abhyankar center, and by Example \ref{non-excellent-DVR} above, this containment is strict. On the other hand, if we put additional restrictions on Abhyankar centers (for instance, if we require the center to be essentially of finite type over $k$), then the collection of Abhyankar valuations coincides with the collection of valuations admitting these more restrictive types of Abhyankar centers. 

The interplay between Abhyankar valuations and valuations admitting Abhyankar centers raises the natural question: does (\ref{function-field-defectless}) have an analogue for valuations of fields that are not necessarily function fields? Moreover, can the feature of possessing Abhyankar centers become intrinsic to a valuation if we restrict the class of admissible centers? The next result provides an affirmative answer for a broad class of Noetherian centers.

\begin{theorem}
\label{f-finiteness-general-abhyankar} 
Let $(R, \mathfrak{m}_R, \kappa_R)$ be a Noetherian, $F$-finite local domain of characteristic $p > 0$ and fraction field $K$. Suppose $\nu$ is a non-trivial valuation of $K$ centered on $R$ with value group $\Gamma_\nu$ and valuation ring $(V, \mathfrak{m}_\nu, \kappa_\nu)$.  Then $R$ is an Abhyankar center of $\nu$  if and only if 
$$[\Gamma_\nu:p\Gamma_\nu][\kappa_\nu:\kappa^p_\nu] = [K:K^p].$$
\end{theorem}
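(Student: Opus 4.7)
My plan is to realize the Abhyankar-center condition as the precise equality case in a chain of inequalities that connect $[\Gamma_\nu:p\Gamma_\nu][\kappa_\nu:\kappa_\nu^p]$ to $[K:K^p]$. Throughout, set $d_1 := \dim_{\mathbb{Q}}(\mathbb{Q} \otimes_{\mathbb{Z}} \Gamma_\nu)$, $t := \td{\kappa_\nu/\kappa_R}$, and $d := \dim R$. The two basic structural inputs are (a) the Kunz-type dimension formula for an $F$-finite Noetherian local domain,
$$[K:K^p] = [\kappa_R:\kappa_R^p] \cdot p^{d},$$
and (b) the fundamental inequality of valuation theory applied to the purely inseparable extension $K/K^p$: since $\nu$ is the unique extension of $\nu^p := \nu|_{K^p}$ (whose value group is $p\Gamma_\nu$ and residue field $\kappa_\nu^p$),
$$[\Gamma_\nu:p\Gamma_\nu] \cdot [\kappa_\nu:\kappa_\nu^p] \leq [K:K^p],$$
with equality precisely when $\nu/\nu^p$ is defectless.

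Next, I would bracket the left-hand side using Abhyankar's inequality by establishing the two ``rank vs.\ $p$-degree'' inequalities
$$[\Gamma_\nu:p\Gamma_\nu] \leq p^{d_1} \qquad \text{and} \qquad [\kappa_\nu:\kappa_\nu^p] \leq [\kappa_R:\kappa_R^p] \cdot p^{t}.$$
The first follows by passing to the localization $\Gamma_\nu \otimes_{\mathbb{Z}} \mathbb{Z}_{(p)}$, a torsion-free $\mathbb{Z}_{(p)}$-module of generic rank $d_1$; a standard argument over the DVR $\mathbb{Z}_{(p)}$ bounds the $\mathbb{F}_p$-dimension of its reduction modulo $p$ by $d_1$. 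For the second, I would choose a transcendence basis $x_1, \dots, x_t$ of $\kappa_\nu/\kappa_R$ (of finite length by Abhyankar's inequality applied to $R$), observe that the intermediate purely transcendental field $L_0 := \kappa_R(x_1, \dots, x_t)$ satisfies $[L_0:L_0^p] = [\kappa_R:\kappa_R^p] \cdot p^t$ by linear disjointness of $\kappa_R/\kappa_R^p$ and $\kappa_R^p(x_1,\dots,x_t)/\kappa_R^p$, and then use that the algebraic extension $\kappa_\nu/L_0$ cannot raise the $p$-degree, yielding $[\kappa_\nu:\kappa_\nu^p] \leq [L_0:L_0^p]$. Combining with Abhyankar's inequality (\ref{more-general-Abhyankar-ineq}), namely $d_1 + t \leq d$, produces the chain
$$[\Gamma_\nu:p\Gamma_\nu][\kappa_\nu:\kappa_\nu^p] \leq [\kappa_R:\kappa_R^p] \, p^{d_1+t} \leq [\kappa_R:\kappa_R^p] \, p^{d} = [K:K^p].$$
The hypothesis that the outer terms are equal forces equality throughout; in particular $d_1 + t = d$, which is exactly the condition that $R$ is an Abhyankar center.

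For the opposite implication, Abhyankar's theorem \cite[Theorem 1]{Abh56} guarantees that when $R$ is an Abhyankar center, the value group $\Gamma_\nu$ is free abelian of rank $d_1$ (so $[\Gamma_\nu:p\Gamma_\nu] = p^{d_1}$) and $\kappa_\nu/\kappa_R$ is finitely generated of transcendence degree $t$; for such an extension with $[\kappa_R:\kappa_R^p]<\infty$, one has $[\kappa_\nu:\kappa_\nu^p] = [\kappa_R:\kappa_R^p]\,p^t$, and multiplying with the Kunz formula gives the identity. The main technical hurdles are (i) justifying the Kunz-type dimension formula at the full generality of $F$-finite Noetherian local domains (as opposed to the regular case, where it is immediate from the freeness of $F_*R$), and (ii) controlling $[\kappa_\nu:\kappa_\nu^p]$ when $\kappa_\nu/\kappa_R$ is not assumed finitely generated, which requires the observation that the hypothesis forces $[\kappa_\nu:\kappa_\nu^p]$ to be finite, hence that $\kappa_\nu/L_0$ behaves like a well-controlled algebraic extension for the purposes of bounding $p$-degree.
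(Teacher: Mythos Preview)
Your proposal is correct and follows essentially the same route as the paper: the paper also derives the chain
\[
[\Gamma_\nu:p\Gamma_\nu][\kappa_\nu:\kappa_\nu^p] \leq p^{d_1+t}[\kappa_R:\kappa_R^p] \leq p^{d}[\kappa_R:\kappa_R^p] = [K:K^p]
\]
via the auxiliary inequalities $[\Gamma:p\Gamma]\le p^{d_1}$ and $[\kappa_\nu:\kappa_\nu^p]\le p^t[\kappa_R:\kappa_R^p]$ (packaged there as Lemma~\ref{few-facts} and Proposition~\ref{general-Abhyankar-centers}), then invokes the Kunz dimension formula $[K:K^p]=p^{\dim R}[\kappa_R:\kappa_R^p]$ for $F$-finite Noetherian local domains to close the argument; the converse likewise uses \cite[Theorem~1]{Abh56}. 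One minor point: your ``basic structural input (b)'', the fundamental inequality for $\nu/\nu^p$, is not actually needed---it is subsumed by the chain you derive and the paper does not invoke it separately.
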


Theorem \ref{f-finiteness-general-abhyankar} has some interesting consequences that we illustrate first.

\begin{corollary}
\label{intrinsic-Abhyankar-center}
Let $\nu$ be a valuation of a field $K$ of characteristic $p > 0$. If $\nu$ admits a Noetherian, $F$-finite center which is Abhyankar, then any other Noetherian, $F$-finite center of $\nu$ is also an Abhyankar center of $\nu$. 

In other words, the property of possessing Noetherian, $F$-finite, Abhyankar centers is intrinsic to a valuation.
\end{corollary}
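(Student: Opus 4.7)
The plan is to derive Corollary \ref{intrinsic-Abhyankar-center} as an immediate consequence of Theorem \ref{f-finiteness-general-abhyankar}, exploiting that the numerical identity $[\Gamma_\nu:p\Gamma_\nu][\kappa_\nu:\kappa^p_\nu] = [K:K^p]$ characterizing Abhyankar centers in that theorem depends only on the valuation $\nu$ and its ambient field $K$, not on any choice of center.

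First I would verify that Theorem \ref{f-finiteness-general-abhyankar} is applicable to any Noetherian, $F$-finite center $R$ of $\nu$. Since $R$ is $F$-finite, its fraction field $K = \Frac(R)$ inherits $F$-finiteness (as a localization of $R$), so in particular $[K:K^p]$ is finite and the theorem's hypotheses are met. Starting from the hypothesis that $\nu$ admits at least one Noetherian, $F$-finite Abhyankar center $R$, I would invoke the ``only if'' direction of Theorem \ref{f-finiteness-general-abhyankar} to extract the identity
\[
[\Gamma_\nu:p\Gamma_\nu][\kappa_\nu:\kappa^p_\nu] = [K:K^p].
\]

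Next, let $R'$ be any other Noetherian, $F$-finite local center of $\nu$. By the convention adopted in Section \ref{valuations-background}, $\Frac(R') = K$ as well, so both sides of the displayed identity are unchanged when $R$ is replaced by $R'$. Applying the ``if'' direction of Theorem \ref{f-finiteness-general-abhyankar} to $R'$ then forces $R'$ to be an Abhyankar center of $\nu$, which is precisely what the corollary asserts.

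There is no substantive obstacle: the entire content of the corollary is already packaged into Theorem \ref{f-finiteness-general-abhyankar}, the main point being that the criterion furnished by that theorem is manifestly intrinsic to $(\nu, K)$. The only item worth flagging explicitly in the write-up is the passage from $F$-finiteness of $R$ to $F$-finiteness of $K$, which ensures $[K:K^p] < \infty$ and hence legitimizes applying the theorem in both directions.
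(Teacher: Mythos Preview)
Your proposal is correct and follows essentially the same approach as the paper: both derive the corollary directly from Theorem \ref{f-finiteness-general-abhyankar} by observing that the identity $[\Gamma_\nu:p\Gamma_\nu][\kappa_\nu:\kappa^p_\nu] = [K:K^p]$ depends only on $\nu$ and $K$, not on the choice of center. Your write-up is slightly more explicit about why the theorem applies (in particular the passage from $F$-finiteness of $R$ to $[K:K^p]<\infty$), but the argument is the same.
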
 

\begin{proof}[\textbf{Proof of Corollary \ref{intrinsic-Abhyankar-center}}]
The proof follows easily from Theorem \ref{f-finiteness-general-abhyankar} using the observation that the identity $[\Gamma_\nu:p\Gamma_\nu][\kappa_\nu:\kappa^p_\nu] = [K:K^p]$ is independent of the choice of a center.
\end{proof}

\begin{corollary}
\label{excellent-Abhyankar-centers-function-fields}
Let $\nu$ be a valuation of a function field $K/k$ over a perfect field $k$ of characteristic $p > 0$ (it suffices for $k$ to be $F$-finite). The following are equivalent:
\begin{enumerate}
\item[(1)] $\nu$ is an Abyankar valuation of $K/k$.
\item[(2)] $\nu$ admits an Abhyankar center which is an excellent local ring.
\end{enumerate}
\end{corollary}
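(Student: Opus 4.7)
I will prove the two implications separately, using the machinery already established in the paper.

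For (1) $\Rightarrow$ (2): The plan is to exhibit an excellent Abhyankar center of an Abhyankar valuation by working on an affine model. Since $K/k$ is finitely generated, I would pick generators $s_1, \dots, s_n$ of $K$ over $k$ and replace each $s_i \notin V$ with $1/s_i$ (which lies in $V$ since $\nu(s_i) < 0$ forces $\nu(1/s_i) > 0$); this produces generators $t_1, \dots, t_n \in V$ with $K = k(t_1, \dots, t_n)$. Localizing $k[t_1, \dots, t_n]$ at the contraction of $\mathfrak{m}_\nu$ yields a center $A$ of $\nu$ essentially of finite type over $k$ with $\Frac(A) = K$. Such an $A$ is excellent, and by the dimension calculation recorded in the paragraph following (\ref{more-general-Abhyankar-ineq}), it is automatically an Abhyankar center of the Abhyankar valuation $\nu$.

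For (2) $\Rightarrow$ (1): Let $R$ be an excellent Abhyankar center of $\nu$. The first step is to note that $R$ is $F$-finite: since $k$ is $F$-finite and $K/k$ is finitely generated, $K$ itself is $F$-finite, and as remarked in the introduction any generically $F$-finite excellent domain of prime characteristic is $F$-finite. The second step is to apply Theorem \ref{f-finiteness-general-abhyankar} to the $F$-finite Abhyankar center $R$, obtaining
$$[\Gamma_\nu:p\Gamma_\nu][\kappa_\nu:\kappa^p_\nu] = [K:K^p].$$
The final step is to invoke the characterization (\ref{1.0.1.1}), established in \cite[Theorem 5.1]{DS16}, which says that in an $F$-finite function field this identity holds precisely when $\nu$ is an Abhyankar valuation of $K/k$.

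\textbf{Main obstacle.} There is no substantive obstacle in the proof itself; the corollary is a clean combination of Theorem \ref{f-finiteness-general-abhyankar}, the passage from generic $F$-finiteness to $F$-finiteness for excellent domains, and the prior identity (\ref{1.0.1.1}). What makes the statement genuinely noteworthy is the gap between ``excellent'' and ``essentially of finite type'': as noted in Remark \ref{Abhyankar-center-remarks}(vi), the analogous implication fails in characteristic zero, so the proof must essentially use positive characteristic, which it does through the Frobenius-theoretic ingredients above.
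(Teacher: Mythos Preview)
Your proposal is correct and follows essentially the same route as the paper. The only cosmetic differences are that in (1) $\Rightarrow$ (2) you explicitly construct a center essentially of finite type over $k$ whereas the paper simply asserts such a center exists, and in (2) $\Rightarrow$ (1) you cite the introduction for ``generically $F$-finite excellent $\Rightarrow$ $F$-finite'' whereas the paper spells out the short integral-closure argument ($R^p$ excellent, so its integral closure in $K$ is module-finite over $R^p$, and $R$ is a submodule of that).
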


\begin{proof}[\textbf{Proof of Corollary \ref{excellent-Abhyankar-centers-function-fields}}]
For (1) $\Rightarrow$ (2), any center of the Abhyankar valuation $\nu$ which is essentially of finite type over $k$, hence also excellent, is an Abhyankar center of $\nu$. For the converse, let $R$ be an excellent, Abhyankar center of $\nu$. As $K$ is $F$-finite, $R$ is also $F$-finite. This follows from the fact that since $R^p$ is excellent (it is isomorphic to $R$), its integral closure $S$ in $K$ is module finite over $R^p$, because $K$ is a finite extension of $K^p$. But $R$ is an $R^p$-submodule of $S$, and submodules of finitely generated modules over Noetherian rings are finitely generated. So $R$ is also module finite over $R^p$, that is, $R$ is $F$-finite. Thus $\nu$ satisfies the identity $[\Gamma_\nu:p\Gamma_\nu][\kappa_\nu:\kappa^p_\nu] = [K:K^p]$ by Theorem \ref{f-finiteness-general-abhyankar}, and since $K/k$ is a function field, (\ref{function-field-defectless}) implies that $\nu$ is an Abhyankar valuation of $K/k$.
\end{proof}

We will prove Theorem \ref{f-finiteness-general-abhyankar} by first developing a connection between the inequality
$$\dim_{\mathbb{Q}}(\mathbb{Q} \otimes_{\mathbb{Z}} \Gamma_\nu) + \td{\kappa_\nu/\kappa_R} \leq \dim(R)$$
and the quantities $[\Gamma_\nu:p\Gamma_\nu]$ and $[\kappa_\nu:\kappa^p_\nu]$. This will also shed light on precisely where $F$-finiteness is used in the proof the theorem. 

In order to achieve the above goal, we recall some general facts about torsion-free abelian groups and $F$-finite fields.

\begin{lemma}
\label{few-facts}
Let $p$ be a prime number, $K$ an $F$-finite field of characteristic $p$, and $\Gamma$ a torsion-free abelian group such that $\dim_{\mathbb Q}(\mathbb{Q} \otimes_{\mathbb Z} \Gamma)$ is finite. We have the following:
\begin{enumerate}
\item If $L$ is an algebraic extension of $K$, then
$$[L:L^p] \leq [K:K^p],$$
with equality if $K \subseteq L$ is a finite extension. In particular, $L$ is then also $F$-finite.
%\item If $(R, \mathfrak m, \kappa)$ is a Noetherian, local, $F$-finite domain with fraction field $K$, then
%$$[K:K^p] = p^{\dim(R)}[\kappa:\kappa^p].$$

\vspace{1mm}

\item If $L$ is field extension of $K$ of transcendence degree $t$, then
$$[L:L^p] \leq p^t[K:K^p],$$
with equality if $L$ is finitely generated over $K$. 

\vspace{1mm}

\item If $s = \dim_{\mathbb Q}(\mathbb{Q} \otimes_{\mathbb Z} \Gamma)$, then
$$[\Gamma:p\Gamma] \leq p^s,$$
with equality if $\Gamma$ is finitely generated.
\end{enumerate}
\end{lemma}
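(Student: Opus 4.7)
The plan is to attack the three parts in order, reducing (2) to (1) via a transcendence basis and handling (3) by a separate linear-algebra argument.

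For part (1), the strategy is to pass to $p$-th roots. Assuming $[K:K^p]=n$, fix a $K^p$-basis $e_1,\dots,e_n$ of $K$. Then $e_1^{1/p},\dots,e_n^{1/p}$ form a $K$-basis of $K^{1/p}$. I would then show the key identity $L^{1/p}=L\cdot K^{1/p}$: given $\ell\in L$ with minimal polynomial $f(X)=\sum a_i X^i\in K[X]$, the element $\ell^{1/p}$ is a root of $g(X)=\sum a_i^{1/p}X^i\in K^{1/p}[X]$ (since $g(X)^p=f(X^p)$ vanishes at $\ell^{1/p}$), so $\ell^{1/p}\in K^{1/p}(\ell)\subseteq L\cdot K^{1/p}$. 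From $L^{1/p}=L\cdot K^{1/p}=Le_1^{1/p}+\cdots+Le_n^{1/p}$ one concludes $[L^{1/p}:L]\leq n$, and applying the Frobenius isomorphism $L\xrightarrow{\sim}L^p$ gives $[L:L^p]\leq n$. For the finite case, the tower law gives $[L:K^p]=[L:L^p][L^p:K^p]$ on one hand and $[L:K][K:K^p]$ on the other; combined with $[L^p:K^p]=[L:K]$ (via Frobenius), this forces $[L:L^p]=[K:K^p]$.

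Parts (2) and (3) are then short. For (2), choose a transcendence basis $x_1,\dots,x_t$ of $L/K$; then $L/K(x_1,\dots,x_t)$ is algebraic, and is finite precisely when $L/K$ is finitely generated. By part (1) it suffices to compute $[K(x_1,\dots,x_t):K(x_1,\dots,x_t)^p]=[K(x_1,\dots,x_t):K^p(x_1^p,\dots,x_t^p)]$, and the monomials $e_i x_1^{a_1}\cdots x_t^{a_t}$ with $0\leq a_j\leq p-1$ and $e_i$ ranging over a $K^p$-basis of $K$ form a basis, giving $p^t[K:K^p]$. For part (3), let $s=\dim_{\mathbb Q}(\mathbb Q\otimes_{\mathbb Z}\Gamma)$ and let $\bar\gamma_1,\dots,\bar\gamma_k\in\Gamma/p\Gamma$ be $\mathbb F_p$-linearly independent; I would show the lifts $\gamma_1,\dots,\gamma_k$ are $\mathbb Q$-linearly independent in $V=\mathbb Q\otimes_{\mathbb Z}\Gamma$. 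If $\sum q_i\gamma_i=0$ in $V$, clearing denominators and dividing out the highest common power of $p$ (legal because $\Gamma$ is torsion-free) yields a relation $\sum n_i\gamma_i=0$ in $\Gamma$ with integer coefficients not all divisible by $p$; reducing mod $p$ contradicts independence of the $\bar\gamma_i$. Thus $k\leq s$, so $[\Gamma:p\Gamma]\leq p^s$. For equality under finite generation, the structure theorem for finitely generated abelian groups combined with torsion-freeness forces $\Gamma\cong\mathbb Z^s$, whence $\Gamma/p\Gamma\cong\mathbb F_p^s$.

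The single technical point I expect to pause on is verifying $L^{1/p}=L\cdot K^{1/p}$ cleanly for arbitrary algebraic extensions; the coefficient-by-coefficient $p$-th root trick does the job, but one must be mindful that $L/K$ need not be separable or finite. Everything else is either linear algebra or a direct application of the Frobenius isomorphism.
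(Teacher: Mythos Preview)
Your tower-law computation for the equality $[L:L^p]=[K:K^p]$ in the finite case is fine, and your arguments for (2) and (3) are correct and essentially what the paper would do (the paper simply cites \cite[Lemma~5.5]{DS16} for (3), but your direct argument is the standard one).

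The gap is in your proof of the inequality $[L:L^p]\le[K:K^p]$ for an arbitrary algebraic extension. The ``key identity'' $L^{1/p}=L\cdot K^{1/p}$ is \emph{false} in general. Take $K=\mathbb F_p(t)$ and $L=\mathbb F_p(t^{1/p^2})$: then $K^{1/p}=\mathbb F_p(t^{1/p})\subseteq L$, so $L\cdot K^{1/p}=L$, whereas $L^{1/p}=\mathbb F_p(t^{1/p^3})\supsetneq L$. The specific step that fails is the implication ``$\ell^{1/p}$ is a root of $g(X)\in K^{1/p}[X]$, so $\ell^{1/p}\in K^{1/p}(\ell)$'': knowing that $\ell^{1/p}$ satisfies a polynomial over $K^{1/p}$ only tells you it is algebraic over $K^{1/p}$, not that it lies in the particular subfield $K^{1/p}(\ell)$. (In the example, $\ell=t^{1/p^2}$ gives $K^{1/p}(\ell)=\mathbb F_p(t^{1/p^2})$, which does not contain $\ell^{1/p}=t^{1/p^3}$.) The identity does hold when $L/K$ is separable, but the lemma makes no such hypothesis.

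The fix is to reverse the order of your argument: prove the finite case first (your tower-law argument), and then reduce the general algebraic case to it. Given $a_1,\dots,a_n\in L$ linearly independent over $L^p$, set $\widetilde L=K(a_1,\dots,a_n)$; this is a \emph{finite} extension of $K$ since $L/K$ is algebraic, so $[\widetilde L:\widetilde L^p]=[K:K^p]$. Since $\widetilde L^p\subseteq L^p$, the $a_i$ remain linearly independent over $\widetilde L^p$, forcing $n\le[\widetilde L:\widetilde L^p]=[K:K^p]$. This is exactly the paper's route.
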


\begin{proof}[Indication of proof of Lemma \ref{few-facts}]
%(1) follows from the proof of \cite[Proposition 2.1]{Kun76}. In the proposition, Kunz proves that when $R$ is $F$-finite, then for any minimal prime ideal $\mathfrak{P}$ of the $\mathfrak m$-adic completetion $\widehat{R}$,
%$$[K:K^p] = p^{\dim(\widehat{R}/\mathfrak{P})}[\kappa:\kappa^p].$$
%This shows that $\dim(\widehat{R}/\mathfrak{P})$ is independent of $\mathfrak{P}$, or in other words that $\widehat{R}$ is equidimensional. In particular, since $\mathfrak P$ is minimal, $\dim(\widehat{R}/\mathfrak{P}) = \dim(\widehat{R}) =  \dim(R)$, which then establishes (1).

(2) clearly follows from (1), and (3) follows from \cite[Lemma 5.5]{DS16}, with equality obviously holding when $\Gamma$ is finitely generated, since $\Gamma$ is then free. We prove (1) here, which is a minor generalization of \cite[Lemma 5.8]{DS16}.  

To show $[L:L^p] = [K:K^p]$ when $K \subseteq L$ is finite is easy (see \cite[Section 4.6]{DS16}). So suppose $K \subseteq L$ is algebraic, and $[K:K^p] < \infty$. It suffices to show that if $a_1, \dots, a_n \in L$ are linearly independent over $L^p$, then
$$n \leq [K:K^p].$$
Let 
$$\widetilde{L} \coloneqq K(a_1, \dots, a_n).$$
Since $L$ is algebraic over $K$, $\widetilde{L}$ is a finite extension $K$, and so by what we already established,
$$[\widetilde{L}:\widetilde{L}^p] = [K:K^p].$$
On the other hand, since $a_1, \dots, a_n$ are linearly independent over $L^p$, and $\widetilde{L}^p \subseteq L^p$, it follows that $a_1, \dots, a_n$ are also linearly independent over $\widetilde{L}^p$. Thus,
$$n \leq [\widetilde{L}:\widetilde{L}^p] = [K:K^p],$$
as desired.
\end{proof}

Using the previous lemma, we can now relate the ramification index (i.e. $[\Gamma_\nu:p\Gamma_\nu]$) and residue degree (i.e. $[\kappa_\nu:\kappa^p_\nu]$) of the extension of valuations $\nu/\nu^p$ to (\ref{more-general-Abhyankar-ineq}):

\begin{proposition}
\label{general-Abhyankar-centers}
Let $\nu$ be a valuation of a field $K$ of characteristic $p > 0$ with valuation ring $(V, \mathfrak{m}_\nu,\kappa_\nu)$, centered on Noetherian local domain $(R, \mathfrak{m}_R, \kappa_R)$ such that
$$[\kappa_R:\kappa^p_R] < \infty.$$
We have the following:
\begin{enumerate}
\item $[\Gamma_\nu:p\Gamma_\nu][\kappa_\nu:\kappa_\nu^p] \leq p^{\dim(R)}[\kappa_R:\kappa_R^p]$.

\item $R$ is an Abhyankar center of $\nu$ if and only if $[\Gamma_\nu:p\Gamma_\nu][\kappa_\nu:\kappa_\nu^p] = p^{\dim(R)}[\kappa_R:\kappa^p_R].$
\end{enumerate}
\end{proposition}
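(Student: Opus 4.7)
The plan is to combine Abhyankar's inequality (\ref{more-general-Abhyankar-ineq}) with the three estimates in Lemma \ref{few-facts}, and then verify that the equality cases align. Let me set $s \coloneqq \dim_{\mathbb{Q}}(\mathbb{Q} \otimes_{\mathbb{Z}} \Gamma_\nu)$ and $t \coloneqq \td{\kappa_\nu/\kappa_R}$. By Abhyankar's inequality we have $s + t \leq \dim(R) < \infty$, so in particular both $s$ and $t$ are finite, which is what is needed to apply Lemma \ref{few-facts}.

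For part (1), I would apply Lemma \ref{few-facts}(3) to the torsion-free abelian group $\Gamma_\nu$ (it is torsion-free because it embeds in an ordered group) to obtain $[\Gamma_\nu:p\Gamma_\nu] \leq p^s$, and Lemma \ref{few-facts}(2) to the extension $\kappa_R \hookrightarrow \kappa_\nu$ of transcendence degree $t$, noting that the hypothesis $[\kappa_R:\kappa_R^p] < \infty$ makes $\kappa_R$ an $F$-finite field and so the lemma applies, yielding $[\kappa_\nu:\kappa_\nu^p] \leq p^t [\kappa_R:\kappa_R^p]$. Multiplying these and using $s + t \leq \dim(R)$ then gives
\[
[\Gamma_\nu:p\Gamma_\nu][\kappa_\nu:\kappa_\nu^p] \leq p^{s+t}[\kappa_R:\kappa_R^p] \leq p^{\dim(R)}[\kappa_R:\kappa_R^p].
\]

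For part (2), the forward implication relies on the additional content of Abhyankar's theorem \cite{Abh56}: when $s + t = \dim(R)$, the group $\Gamma_\nu$ is free abelian (necessarily of finite rank $s$, hence finitely generated) and $\kappa_\nu$ is a finitely generated extension of $\kappa_R$. Consequently, the equality clauses in Lemma \ref{few-facts}(3) and (2) apply, giving $[\Gamma_\nu:p\Gamma_\nu] = p^s$ and $[\kappa_\nu:\kappa_\nu^p] = p^t[\kappa_R:\kappa_R^p]$, which combine to the desired equality. The reverse implication is the easier direction: if the product identity of part (2) holds, then the chain of inequalities displayed above must be equalities throughout, and in particular $p^{s+t} = p^{\dim(R)}$, forcing $s + t = \dim(R)$.

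There is no real obstacle here once Lemma \ref{few-facts} is in hand; the only subtlety is making sure that the hypothesis $[\kappa_R:\kappa_R^p] < \infty$ (rather than $F$-finiteness of $R$ itself) is enough to trigger Lemma \ref{few-facts}(2), and that finiteness of $s$ and $t$ (which we get from Abhyankar's inequality) is what lets us invoke parts (2) and (3) of the lemma at all. No appeal to $F$-finiteness of $R$ is needed for this proposition — that hypothesis will only enter later, in the proof of Theorem \ref{f-finiteness-general-abhyankar}, to relate $p^{\dim(R)}[\kappa_R:\kappa_R^p]$ to $[K:K^p]$ via Kunz's theorem.
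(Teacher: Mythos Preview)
Your proposal is correct and follows essentially the same argument as the paper: set $s$ and $t$, invoke Abhyankar's inequality for finiteness, apply Lemma~\ref{few-facts}(2),(3) for the two factor bounds, and for part~(2) use the equality clauses of Abhyankar's theorem and Lemma~\ref{few-facts} in one direction and the squeeze on the chain of inequalities in the other. Your closing observation that only $[\kappa_R:\kappa_R^p]<\infty$ (not $F$-finiteness of $R$) is needed here is also exactly the point the paper is making by stating this proposition separately from Theorem~\ref{f-finiteness-general-abhyankar}.
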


\begin{proof}[\textbf{Proof of Proposition \ref{general-Abhyankar-centers}}]
%Let $d \coloneqq \dim(R)$. Since $R$ is $F$-finite, 
%$$\textrm{$[\kappa_R: \kappa^p_R]$, $[K:K^p]$, and $[\kappa_\nu:\kappa^p_\nu] < \infty$}.$$  
%Also, by Lemma \ref{few-facts}(1) we know that
%Using inequality (\ref{more-general-Abhyankar-ineq}), both $\dim_{\mathbb Q}(\mathbb{Q} \otimes_{\mathbb Z} \Gamma_\nu)$ and $\td{\kappa_\nu/\kappa_R}$ are finite. So throughout the proof, let
Throughout the proof, let
$$s \coloneqq \dim_{\mathbb Q}(\mathbb{Q} \otimes_{\mathbb Z} \Gamma_\nu) \hspace{1mm} \textrm{and} \hspace{1mm} t \coloneqq \td{\kappa_\nu/\kappa_R}.$$
%The proof will closely follow the argument used to prove (\ref{function-field-defectless}) in prior work of the author with Karen Smith.

\noindent (1)  Abhyankar's inequality (\ref{more-general-Abhyankar-ineq}) implies 
$$s + t \leq \dim(R).$$ 
In particular, $s$ and $t$ are both finite. Using Lemma \ref{few-facts}(3), we get 
$$[\Gamma_\nu:p\Gamma_\nu] \leq p^s.$$
On the other hand, since $\kappa_R$ is $F$-finite by hypothesis, and $\kappa_\nu$ has transcendence degree $t$ over $\kappa_R$, Lemma \ref{few-facts}(2) shows
$$[\kappa_\nu:\kappa_\nu^p] \leq  p^t[\kappa_R: \kappa_R^p].$$
Thus,
\begin{equation}
\label{ram-res-inequality}
[\Gamma_\nu:p\Gamma_\nu][\kappa_\nu:\kappa_\nu^p] \leq p^{s+t}[\kappa_R:\kappa^p_R] \leq p^{\dim(R)}[\kappa_R:\kappa^p_R].
\end{equation}

\noindent (2) Suppose $R$ is an Abhyankar center of $\nu$, that is, 
%$$\dim_{\mathbb{Q}}(\mathbb{Q} \otimes_{\mathbb Z} \Gamma_\nu) + \td{\kappa_\nu/\kappa_R} = \dim(R).$$ 
$$s + t = \dim(R).$$
By \cite[Theorem 1]{Abh56}, $\Gamma_\nu$ is a free abelian group of rank $s$, and $\kappa_\nu$ is a finitely generated field extension of $\kappa_R$ of transcendence degree $t$. Again using Lemma \ref{few-facts}, we get
$$\textrm{$[\Gamma_\nu:p\Gamma_\nu] = p^s$ and $[\kappa_\nu: \kappa^p_\nu] = p^t[\kappa_R:\kappa^p_R]$},$$ 
and so
$$[\Gamma_\nu:p\Gamma_\nu][\kappa_\nu:\kappa^p_\nu] = p^{s+t}[\kappa_R:\kappa^p_R] = p^{\dim(R)}[\kappa_R:\kappa^p_R],$$
proving the forward implication. 

Conversely, if
$$[\Gamma_\nu:p\Gamma_\nu][\kappa_\nu:\kappa^p_\nu] = p^{\dim(R)}[\kappa_R:\kappa^p_R]$$
then
$$p^{\dim(R)}[\kappa_R:\kappa^p_R] = [\Gamma_\nu:p\Gamma_\nu][\kappa_\nu:\kappa^p_\nu] \leq p^{s+t}[\kappa_R:\kappa^p_R] \leq p^{\dim(R)}[\kappa_R:\kappa^p_R],$$
where the inequalities follow from (\ref{ram-res-inequality}).
Thus, $\dim(R) = s + t$, which by definition means that $R$ is an Abhyankar center of $\nu$. 
\end{proof}

Theorem \ref{f-finiteness-general-abhyankar} now follows readily from Proposition \ref{general-Abhyankar-centers}.

\begin{proof}[\textbf{Proof of Theorem \ref{f-finiteness-general-abhyankar}}]
Suppose $R$ is a Noetherian, $F$-finite, local domain with fraction field $K$. Then 
$$\textrm{$[\kappa_R: \kappa^p_R]$ and $[K:K^p] < \infty$}.$$
In particular, $R$ satisfies the hypotheses of Proposition \ref{general-Abhyankar-centers}, and so Theorem \ref{f-finiteness-general-abhyankar} follows if we can show that
\begin{equation}
\label{estimate-for-K-dim}
[K:K^p] = p^{\dim(R)}[\kappa_R:\kappa^p_R].
\end{equation}
This is a well-known result that is implicit in the proof of \cite[Proposition 2.1]{Kun76}. However, since (\ref{estimate-for-K-dim}) is crucial for our proof, we briefly indicate how it is established. In \cite[Proposition 2.1]{Kun76}, Kunz uses the analogue of Noetherian normalization for complete rings to show that when $R$ is $F$-finite, then for any minimal prime ideal $\mathfrak{P}$ of the $\mathfrak m_R$-adic completion $\widehat{R}$,
$$[K:K^p] = p^{\dim(\widehat{R}/\mathfrak{P})}[\kappa_R:\kappa^p_R].$$
This shows that $\dim(\widehat{R}/\mathfrak{P})$ is independent of $\mathfrak{P}$, or in other words that $\widehat{R}$ is equidimensional. However, since $\mathfrak P$ is minimal, we then have 
$$\dim(\widehat{R}/\mathfrak{P}) = \dim(\widehat{R}) =  \dim(R),$$ 
which confirms (\ref{estimate-for-K-dim}).
\end{proof}

\begin{remarks}
{\*}
\label{Abhyankar-center-remarks}
\begin{enumerate}
\item[(i)] A key point in the proof of Theorem \ref{f-finiteness-general-abhyankar} is that when $(R, \mathfrak m, \kappa)$ is an $F$-finite, Noetherian, local domain of characteristic $p > 0$ and fraction field $K$, then
\begin{equation}
\label{estimate-for-K-dim-2}
[K:K^p] = p^{\dim(R)}[\kappa_R:\kappa^p_R].
\end{equation}
A careful analysis of the proof of \cite[Proposition 2.1]{Kun76} reveals that (\ref{estimate-for-K-dim-2}) holds for any Noetherian, local domain $R$ such that $\Omega^{1}_{R/\mathbb{Z}}$ is a finitely generated $R$-module and the completion $\widehat{R}$ is reduced, that is, if $R$ is analytically unramified. We note that when $R$ is $F$-finite, it satisfies both these properties. Indeed, since $R$ is a finitely generated $R^p$-module, 
$$\Omega^1_{R/\mathbb{Z}} = \Omega^1_{R/R^p}$$
is then a finitely generated $R$-module, and $\widehat{R}$ is reduced by \cite[Lemma 2.4]{Kun69}.

Theorem \ref{f-finiteness-general-abhyankar}, hence Corollary \ref{intrinsic-Abhyankar-center}, clearly hold more generally for the class of Noetherian centers of any $F$-finite field that satisfy (\ref{estimate-for-K-dim-2}).
%, and this class include non-Noetherian local rings as well. For example, one can easily check that the lexicographical valuation ring of Example \ref{some-examples}(b) satisfies (\ref{estimate-for-K-dim-2}).
While such centers are quite common, it is not difficult to construct generically $F$-finite Noetherian local, domains that do not satisfy (\ref{estimate-for-K-dim-2}). For instance, (\ref{estimate-for-K-dim-2}) fails for the non $F$-finite DVR of $\mathbb{F}_p(X,Y)$ constructed in Example \ref{non-excellent-DVR}. In particular, since regular local rings are analytically unramified, our observations imply that the module of absolute K\"ahler differentials of the DVR of Example \ref{non-excellent-DVR} must not be finitely generated.

%$F$-finiteness of $R$ is crucial for the above equality, even when $[K:K^p] < \infty$ and $R$ is Noetherian. For example, the equality fails for the .

  %In fact, by \cite[Theorem 1]{Abh56}, the only valuation rings satisfying this identity are essentially the lexicographical ones.

\vspace{1mm}

\item[(ii)] Theorem \ref{f-finiteness-general-abhyankar} generalizes (\ref{function-field-defectless}). Indeed, if $K/k$ is an $F$-finite function field, then any valuation $\nu$ of $K/k$ admits an $F$-finite, Noetherian center. For instance, by the valuative criterion of properness, $\nu$ is centered on a proper $k$-variety with function field $K$, and the local ring of this variety at the center is $F$-finite. Since we observed that $\nu$ is an Abhyankar valuation if and only if any center of $\nu$ which is essentially of finite type over $k$ is an Abhyankar center, it follows by Theorem \ref{f-finiteness-general-abhyankar} that $\nu$ is Abhyankar precisely when $[\Gamma_\nu:p\Gamma_\nu][\kappa_\nu:\kappa_\nu^p] = [K:K^p]$.  %For the same reason Corollary \hyperref[consequences]{4.0.8}(b) generalizes (\ref{f-finiteness-divisorial}).

\vspace{1mm}

\item[(iii)] One can reinterpret Theorem \ref{f-finiteness-general-abhyankar} as saying that an $F$-finite, Noetherian center $R$ of $\nu$ is an Abhyankar center of $\nu$ if and only if the extension of valuations $\nu/\nu^p$ is defectless.

\vspace{1mm}

\item[(iv)] When $[K:K^p] < \infty$, the condition $[\Gamma_\nu:p\Gamma_\nu][\kappa_\nu:\kappa_\nu^p] = [K:K^p]$ does not imply that the valuation $\nu$ admits a Noetherian, $F$-finite center. See Example \ref{valuation-non-perfect-no-F-finite-center} for a counter-example, which shows that counter-examples can be constructed even for valuations of fields that are not perfect.
\vspace{1mm} 

\item[(v)] For a Noetherian domain $R$ with $F$-finite fraction field $K$, the following are all equivalent:
\begin{enumerate}
\item $R$ is $F$-finite.
\item $R$ is excellent.
\item $R$ is a Japanese/N-2 ring.
\item The integral closure of $R^p$ in $K$ is a finite $R^p$-module.
\end{enumerate}
We already saw $\textrm{(b) $\Rightarrow$ (c) $\Rightarrow$ (d) $\Rightarrow$ (a)}$ in the proof of Corollary \ref{excellent-Abhyankar-centers-function-fields}. The hard part is to show $\textrm{(a) $\Rightarrow$ (b)}$, which follows from \cite[Theorem 2.5]{Kun76}.  As a consequence, in Theorem \ref{f-finiteness-general-abhyankar}, the $F$-finiteness assumption on centers can be replaced by excellence, provided we assume the ambient field $K$ is $F$-finite. Hence when the fraction field of a valuation is $F$-finite, the property of admitting excellent Abhyankar centers is intrinsic to the valuation. In particular, this is true for valuations of function fields over perfect ground fields of prime characteristic.

\vspace{1mm}

\item[(vi)] The analogue of (v) is false for valuations of function fields over ground fields of characteristic $0$, that is, whether a valuation admits an excellent Abhyankar center depends on the excellent center. For instance, any DVR of characteristic 0 is automatically excellent \cite[Tag 07QW]{Stacks}, and by imitating the construction of Example \ref{non-excellent-DVR} using the fields $\mathbb{C}(X,Y)$ and $\mathbb{C}((t))$ instead, one can show that there exists a discrete valuation $\nu$ of $\mathbb{C}(X,Y)/\mathbb{C}$ centered on $\mathbb{C}[X,Y]_{(X,Y)}$ such that the latter is not an Abhyankar center of $\nu$ (see \cite[Example 1(iv)]{ELS03} for more details). However, $\nu$ is also trivially centered on its own valuation ring which is an excellent, Abhyankar center of $\nu$. The same example also shows that Corollary \ref{excellent-Abhyankar-centers-function-fields} is false over ground fields of characteristic $0$.

This remark and (v) indicate that excellent rings in characteristic $p > 0$ behave very differently from excellent rings in characteristic $0$, and that the notion of excellence in prime characteristic is more restrictive than in characteristic $0$.
\end{enumerate}
\end{remarks}

%\vspace{0.5mm}

\section{Summary of $F$-singularities for valuation rings}
\label{Summary of F-singularities for valuation rings}

\vspace{-1ex}

We summarize all known results on F-singularities of valuation rings, grouping them according to the type of F-singularity they characterize. While most results are proved in \cite{DS16} and the erratum \cite{DS17}, some new results also appear below (with complete proofs). %Precise references are given for every result that already appeared in \cite{DS16, DS17}.%We group results according to the type of F-singularity they characterize.

For a valuation ring $(V, \mathfrak m, \kappa)$ of a field $K$ of characteristic $p > 0$ with associated valuation $\nu$, singularities of $V$ defined using the Frobenius map are intimately related to properties of the extension of valuations $\nu/\nu^p$, where $\nu^p$ denotes the restriction of $\nu$ to the subfield $K^p$ of $K$. Recall that the valuation ring of $\nu^p$ is $V^p$, and the residue field of $\nu^p$ can be identified with $\kappa^p$. Furthermore, $\nu$ is the unique extension of $\nu^p$ to $K$ (up to equivalence of valuations), and $V$ is the integral closure of $V^p$ in $K$.

In what follows $\Gamma$ or $\Gamma_\nu$ will always denote the value group of a valuation (ring), and $\kappa$ or $\kappa_\nu$ its residue field.

%In what follows, $K$ is a function field over an F-finite ground field $k$. When our valuation is a valuation of $K/k$, we will explicitly say so. Otherwise, all valuation rings are arbitrary, of characteristic $p > 0$.%In what follows all fields have prime characteristic.

%For the discussion below, we assume all valuations are non-trivial, and that valuation rings are not fields. 

\noindent \textbf{Flatness of Frobenius and F-purity:}

\begin{enumerate}
\item \label{flatness} \cite[Theorem 3.1]{DS16} The Frobenius endomorphism on any valuation ring of prime characteristic is always faithfully flat. Hence a valuation ring of prime characteristic is $F$-pure, and so close to being Frobenius split. %In particular, if $V$ is F-finite, $F_*V$ is a free $V$-module, hence $V$ is Frobenius split.

\begin{remark}
It is not difficult to construct valuation rings for which Frobenius is pure but not split. For example, the non $F$-finite DVR of Example \ref{non-excellent-DVR} is not Frobenius split, because any Frobenius split Noetherian domain with $F$-finite fraction field has to $F$-finite \cite[Theorem 4.2.1]{DS16} (see also (\ref{noetherian-Frobenius-splitting}) below).
\end{remark}

%\begin{remark}
%Flatness of Frobenius follows because any finitely generated torsion-free module over a valuation ring is free. For Noetherian rings, flatness of Frobenius characterizes regularity (Theorem \ref{Kunz-theorem}), and so one can in some sense think of valuation rings in prime characteristic as being non-Noetherian analogues of regular rings. %Note, however, that there are non-Noetherian rings that or not valuation rings for which Frobenius is flat, for instance the absolute integral closure of any prime characteristic Noetherian domain. %Similarly, Frobenius is also flat for any Dedekind domain of prime characteristic.
%\end{remark}
\end{enumerate}

\noindent \textbf{$F$-finiteness in general:}

Let $(V, \mathfrak m, \kappa)$ be a valuation ring of a field $K$, with associated valuation $\nu$. A necessary condition for $V$ to be $F$-finite is that $[K:K^p] < \infty$, that is, $K$ is $F$-finite. So we implicitly assume in our discussion of $F$-finiteness of $V$ that $K$ is $F$-finite to begin with. Note $F$-finiteness of $K$ also implies $$[\kappa:\kappa^p]< \infty,$$ 
that is, the residue field is always $F$-finite. This follows by observing that $[\kappa:\kappa^p]$ is the \emph{residue degree} of the extension of valuations $\nu/\nu^p$ and then using \cite[VI, $\mathsection 8.1$, Lemma 2]{Bou89}.

\begin{enumerate}

\setcounter{enumi}{1}\item \label{general-f-finiteness}  The following are equivalent:
\begin{enumerate}
\item[(a)] $V$ is $F$-finite.
\item[(b)] $V$ is a free $V^p$-module of rank $[K:K^p]$. %\cite[Theorem 4.1.1]{DS16}. 
\item[(c)] $\dim_{\kappa^p}(V/\mathfrak{m}^{[p]}) = [K:K^p]$.
\end{enumerate}

\vspace{1mm}

\begin{proof}[\textbf{Proof of (2)}]
The equivalence of (a) and (b) is shown in \cite[Theorem 4.1.1]{DS16}. Although used in the proof of \cite[Erratum, revised Corollary 4.3.2]{DS17}, the equivalence of (c) to (a) and (b) is not explicitly stated in \cite{DS16, DS17}. Thus, we include a complete proof here. %even though the result follows by appropriately interpreting \cite[VI, $\mathsection 8.5$, Thm 2]{Bou89}.

Let $n \coloneqq [K:K^p]$. We show (b) and (c) are equivalent. Suppose $V$ is a free module of rank $n$ over the subring $V^p$, which is a valuation ring of $K^p$. If $\eta$ is the maximal ideal of $V^p$, we see that 
$$V/\mathfrak{m}^{[p]} \cong V \otimes_{V^p} V^p/\eta$$
is a free $\kappa^p = V^p/\eta$-module of rank $n$, which proves (b) $\Rightarrow$ (c). For the converse, suppose $\dim_{\kappa^p}(V/\mathfrak{m}^{[p]}) = [K:K^p] =  n$. Choose $x_1, \dots, x_n \in V$ such that the images of $x_i$ in $V/\mathfrak{m}^{[p]}$ form a $\kappa^p$-basis, and let
$$L \coloneqq V^px_1 + \dots + V^px_n.$$ 
Note $L$ is a finitely generated, torsion free $V^p$-module, hence free over $V^p$ since finitely generated torsion-free modules over valuation rings are free. To prove (b), it suffices to show that
$$L = V.$$ 
The rank of $L$ equals $\dim_{\kappa^p} L/\eta L$, and it is easy to see that the images of $x_1, \dots, x_n$ in $L/\eta L$ form a $\kappa^p$-basis of $L/\eta L$. Thus, $L$ is a free $V^p$-module of rank $n$, and so $\{x_1, \dots, x_n\}$ is a $V^p$-basis of $L$. 

Observe that $\{x_1, \dots, x_n\}$ is also linearly independent over $K^p$, and since $[K:K^p] = n$, this means that $\{x_1, \dots, x_n\}$ is a $K^p$-basis of $K$. Let $s \in V$ be a non-zero element, and $r_1, \dots, r_n \in K^p$ such that
$$s = r_1x_1 + \dots + r_nx_n.$$
Clearly $V = L$, if we can prove that all the $r_i$ are elements of $V^p$. By renumbering the $x_i$, and using the fact that $V^p$ is a valuation ring of $K^p$, we may assume without loss of generality that $r_1 \neq 0$ and 
$$r_ir_1^{-1} \in V^p,$$ 
for all $i \geq 2$. If $r_1 \in V^p$, then all the $r_i$ are already in $V^p$. If not, then $r_1^{-1}$ is an element of the maximal ideal, $\eta$, of $V^p$. Thus,
$$r_1^{-1}s = x_1 + r_2r_1^{-1}x_2 + \dots + r_nr_1^{-1}x_n,$$
which contradicts $\kappa^p$-linear independence of the images of $x_1, \dots, x_n$ in $V/\eta V = V/\mathfrak{m}^{[p]}$. Hence all the $r_i$ are elements of $V^p$, and we are done.
\end{proof}

\vspace{2mm}

\item \label{assuming-f-finiteness} 
\begin{enumerate}

\item[(i)] \cite[Erratum, Lemma 2.2]{DS17} %Consider a valuation ring $(V, \mathfrak m, \kappa)$, with F-finite fraction field $\Frac(V)$. 
For $(V, \mathfrak m, \kappa)$ and $K$ as above,
\vspace{-1ex} 
\[   
 \dim_{\kappa^p}(V/\mathfrak{m}^{[p]}) = 
     \begin{cases}
       [\kappa: \kappa^p] &\quad\text{if $\mathfrak m$ is not finitely generated,}\\
       p[\kappa:\kappa^p] &\quad\text{if $\mathfrak m$ is finitely generated.} \\ 
     \end{cases}
\]

\begin{proof}[\textbf{Sketch of proof of (i)}] %Since (i) is used later, we briefly indicate how to prove it.
 The assertion follows from the short exact sequence of $\kappa^p$-vector spaces
$$0 \rightarrow \mathfrak{m}/\mathfrak{m}^{[p]} \rightarrow V/\mathfrak{m}^{[p]} \rightarrow \kappa \rightarrow 0,$$
with the additional observations that when $\mathfrak{m}$ is not finitely generated, $\mathfrak{m}^{[p]} = \mathfrak m$ \cite[Lemma 2.1]{DS17}, and when $\mathfrak{m}$ is finitely generated, $\mathfrak{m}$ is principal, so that $\dim_{\kappa^p}(\mathfrak{m}/\mathfrak{m}^{[p]}) = (p-1)[\kappa:\kappa^p]$.
\end{proof}

%\vspace{1.5mm}

%\item[(ii)] $V$ is F-finite if and only if $\dim_{\kappa^p}(V/\mathfrak{m}^{[p]}) = [K:K^p]$.

\vspace{1.5mm}
%\vspace{1mm}
\item[(ii)] For an \emph{F-finite}, valuation ring $V$ with value group $\Gamma \neq 0$:
\begin{itemize}
\item[(a)] $[K:K^p] = [\Gamma:p\Gamma][\kappa:\kappa^p] =  \dim_{\kappa^p}(V/\mathfrak{m}^{[p]})$ (\cite[corrected Thm 4.3.1]{DS17} and (\ref{general-f-finiteness})).

\item[(b)] The value group $\Gamma$ satisfies $[\Gamma: p\Gamma] = 1$ or $[\Gamma:p\Gamma] = p$. 

\item[(c)] If the maximal ideal of $V$ is \emph{not} finitely generated, then $\Gamma$ is $p$-divisible. %Moreover, when the value group is $p$-divisible, $V$ is F-finite if and only if $[\Frac(V): \Frac(V)^p] = [\kappa:\kappa^p]$ \cite[Erratum, revised Corollary 4.3.2]{DS16}.

%\item[(e)] If $[\Frac(V): \Frac(V)^p] = [\kappa:\kappa^p]$ if and only if $V$ is F-finite and the value group is $p$-divisible \cite[Erratum, revised Corollary 4.3.2]{DS16}.

\item[(d)] If $\Gamma$ is finitely generated, then $V$ is a DVR.  
\end{itemize}

\begin{remark}
\label{main-error}
%Here $[\Gamma: p\Gamma]$ is the \emph{ramification index}, and $[\kappa:\kappa^p]$ the \emph{residue degree} of the extension of valuations $v/v^p$ \cite[Subsection 2.2 and Remark 4.3.3]{DS16}. %Note $v$ is the unique extension of $v^p$ to $\Frac(V)$, and $V$ is the integral closure of $V^p$ in $\Frac(V)$ since $V$ is normal and clearly integral over $V^p$. 
The \textbf{error} in \cite{DS16} arose from the incorrect assertion that  $[\Gamma:p\Gamma][\kappa:\kappa^p] = [K:K^p] \Rightarrow V$ is F-finite (although the assertion is true when $V$ is a DVR by (3)(iv) below). %See \cite[Erratum, correction to Theorem 4.3.1]{DS17} for more details. %Note, however, that if $V$ DVR%However, when $V$ is a DVR, $[\Gamma:p\Gamma][\kappa:\kappa^p] = [K:K^p]$ does imply $V$ is $F$-finite. For note then that 
\end{remark}

\vspace{1.5mm}

\item[(iii)] If $[K: K^p] = [\kappa:\kappa^p]$, then $V$ is F-finite \cite[revised Corollary 4.3.2]{DS17}.

\vspace{1.5mm}

\item[(iv)] If $V$ is a DVR, then $V$ is $F$-finite if and only if $[\Gamma:p\Gamma][\kappa:\kappa^p] = [K:K^p]$.

\vspace{1mm}

\begin{proof}[\textbf{Proof of (iv)}]
If $V$ is $F$-finite, then $[\Gamma:p\Gamma][\kappa:\kappa^p] = [K:K^p]$ by (ii)(a) above. For the converse, we have 
$$[K:K^p] = [\Gamma:p\Gamma][\kappa:\kappa^p] = [\mathbb{Z}:p\mathbb Z][\kappa:\kappa^p] = p[\kappa:\kappa^p] = \dim_{\kappa^p}(V/\mathfrak{m}^{[p]}),$$
where we use (\ref{assuming-f-finiteness})(i) for the final equality. So $V$ is $F$-finite by (\ref{general-f-finiteness}).
\end{proof}

\vspace{1mm}

\begin{remark}
The proof of (iv) shows more generally that if $V$ is a valuation ring with principal maximal ideal $\mathfrak{m}$ and such that $[\Gamma:p\Gamma] = p$, then $[\Gamma:p\Gamma][\kappa:\kappa^p] = [K:K^p]$ implies $V$ is $F$-finite.
\end{remark}

\vspace{1mm}
\end{enumerate}

\end{enumerate}

\noindent \textbf{$F$-finiteness and finite field extensions:}

\begin{enumerate}
\setcounter{enumi}{3} \item \label{F-finiteness-preserved-finite-ext}\cite[Section 3]{DS17}
Let $K \subseteq L$ be a finite extension of $F$-finite fields. Let $\nu$ be a valuation on $K$ and $w$ an extension of $\nu$ to $L$. Then the valuation ring of $\nu$ is $F$-finite if and only if the valuation ring of $w$ is $F$-finite.
\end{enumerate}

%\noindent \rule{1cm}{1pt}$\bullet$ \par

%\vspace{1mm}

%\item[(4)]  When $\Frac(V)$ is F-finite and $\mathfrak m$ is not finitely generated,  
%\end{enumerate}

\vspace{1.5mm}

\noindent \textbf{$F$-finiteness in function fields:}

%\noindent 
Let $K$ be a finitely generated field extension of an F-finite field $k$.

\begin{enumerate}
\setcounter{enumi}{4} \item \label{abhyankar-defectless} \cite[Theorem 1.1]{DS17}  A valuation $\nu$ of $K/k$ is Abhyankar if and only if 
$$[\Gamma_\nu: p\Gamma_\nu][\kappa_\nu:\kappa^p_\nu] = [K:K^p].$$
See also Theorem \ref{f-finiteness-general-abhyankar} for a generalization of the above result.

\vspace{1mm}

\item \label{f-finiteness-divisorial}\cite[Erratum, Theorem 0.1]{DS17} For a non-trivial valuation $\nu$ of $K/k$, the associated valuation ring $V$ is $F$-finite if and only if $\nu$ is divisorial.

\begin{proof}[\textbf{Sketch of proof of (\ref{f-finiteness-divisorial})}]
The non-trivial implication is that when $V$ is $F$-finite, its valuation $\nu$ is divisorial. However, (\ref{assuming-f-finiteness})(ii)(a) and (\ref{abhyankar-defectless}) imply that $\nu$ is Abhyankar, and so has a finitely generated value group. Using (\ref{assuming-f-finiteness})(ii)(d) one then concludes $V$ is a DVR. Now a classical result of Zariski shows that any Abhyankar DVR is divisorial \cite[VI, $\mathsection 14$, Theorem 31]{SZ60}.
\end{proof}

\begin{remarks}
{\*}
\label{perfect-valuation-rings}
\begin{enumerate}
\item[(i)] \cite[Theorem 5.1]{DS16} erroneously states that an Abhyankar valuation ring of $K/k$ is F-finite. The justification for why this is wrong is given in Remark \ref{main-error}. For a counter-example, the valuation ring $V$ of the lexicographical valuation on $\mathbb{F}_p(X, Y)/\mathbb{F}_p$ with value group $\mathbb{Z}^{\oplus 2}$ (ordered lexicographically) is Abhyankar (see Example \ref{some-examples}(a)), but not $F$-finite because
$$\dim_{\kappa^p}(V/\mathfrak{m}^{[p]}) = p[\kappa:\kappa^p] = p[\mathbb{F}_p:\mathbb{F}_p] = p \neq  [\mathbb{F}_p(X, Y): \mathbb{F}_p(X, Y)^p].$$
Here the first equality holds by (\ref{assuming-f-finiteness})(i) because the maximal ideal of $V$ is principal, with $Y$ being a generator. The second equality holds because the residue field of $V$ is $\mathbb F_p$.  %This gives an example of an Abhyankar valuation that is not $F$-finite. 

\vspace{1.5mm}

\item [(ii)] When not in a function field, it is easy to construct non-Noetherian $F$-finite valuation rings. For example, the perfection $\mathbb{F}_p[[t^{1/p^{\infty}}]] \coloneqq \bigcup_{e \in \mathbb{N}} \mathbb{F}_p[[t^{1/p^e}]]$ of the power series ring $\mathbb{F}_p[[t]]$ is a non-Noetherian, $F$-finite valuation ring of its fraction field $\mathbb{F}_p((t^{1/p^{\infty}}))$. More generally, a non-trivial valuation ring of any perfect field of prime characteristic is not Noetherian, but $F$-finite because Frobenius is an isomorphism for such a ring. Rings of prime characteristic for which Frobenius is an isomorphism are called \emph{perfect rings}. Such rings have been extensively investigated of late since finding applications in Scholze's work on perfectoid spaces (see \cite{Sch12}, \cite{BS16}). While perfect rings are trivially $F$-finite, there exist non-Noetherian, $F$-finite valuation rings that are \emph{not} perfect. 

\vspace{1mm}

Suppose $L$ is a perfect field of prime characteristic equipped with a non-trivial valuation $\nu$ with value group $\Gamma_\nu$. For instance $L$ can be a perfectoid field, or the algebraic closure of a field which has non-trivial valuations. Then the residue field $\kappa_\nu$ of the associated valuation ring is also perfect. Now consider the group 
$$\Gamma' \coloneqq \Gamma_\nu \oplus \mathbb{Z}$$ 
ordered lexicographically, and the field $L(X)$, where $X$ is an indeterminate. There exists a \emph{unique} extension $w$ of the valuation $\nu$ to $L(X)$ with value group $\Gamma'$ such that
for any polynomial $f = \sum_{i=0}^n a_iX^i$ in $L[X]$, we have
$$w(f) = \inf\{(\nu(a_i), i): i = 0, \dots, n\}.$$
The residue field $\kappa_w$ of $w$ equals the residue field $\kappa_\nu$ \cite[VI, $\mathsection 10.1$, Proposition 1]{Bou89}, hence is also perfect. Also, $\Gamma'$ has a smallest element $> 0$ in the lex order, namely $(0,1)$. Thus, if $(R_w, \mathfrak{m}_w)$ is the valuation ring of $w$, the maximal ideal $\mathfrak{m}_w$ is principal, and in fact generated by $X$. Using (3)(i), we see that
\begin{equation}
\label{equation1}
\dim_{\kappa^p_w}(R_w/\mathfrak{m}^{[p]}_w) = p[\kappa_w:\kappa^p_w] = p = [L(X):L(X)^p].
\end{equation}
Then $R_w$ is $F$-finite by (\ref{general-f-finiteness}), not Noetherian because $\Gamma' = \Gamma_\nu \oplus \mathbb{Z}$ has rational rank at least $2$, and {not} perfect because the field $L(X)$ is not perfect.

\vspace{1mm}

\item[(iii)] Curiously, if instead of taking $\Gamma' = \Gamma_\nu \oplus \mathbb{Z}$ ordered lexicographically we take $\Gamma' = \mathbb{Z} \oplus \Gamma_\nu$ ordered lexicographically in the above construction, the resulting  extension $w$ of $\nu$ to $L(X)$ (with obvious modifications to the definition of $w$) does not have an $F$-finite valuation ring $R_w$. Indeed, then the maximal ideal of $R_w$ is not finitely generated, while the residue field $\kappa_w$ still coincides with $\kappa_\nu$, which is perfect. Thus $\dim_{\kappa^p_w}(R_w/\mathfrak{m}^{[p]}_w) = [\kappa_w:\kappa^p_w] = 1 \neq [L(X):L(X)^p]$.
\end{enumerate} %In this example one can also directly verify that $\{1, X, X^2, \dots, X^{p-1}\}$ generates $R_w$ freely over $R^p_w$. 

%Now consider the extension of valuations $w/w^p$. Note $w$ is the unique extension of $w^p$ to $K(X)$, and $R_w$ is the integral closure of $R^p_w = R_{w^p}$ in $K(X)$. Applying \cite[VI, $\mathsection 8.5$, Theorem 2]{Bou89} in conjunction with (\ref{equation}), it then follows that the extension $R^p_w = R_{w^p} \subset R_w$ is finite, that is $R_w$ is F-finite. Moreover, $R_w$ is not Noetherian because the value group $\Gamma' = \Gamma \oplus \mathbb{Z}$ has rational rank at least $2$, and $R_w$ is \emph{not} perfect because the fraction field $K(X)$ is not perfect.
\end{remarks}

\end{enumerate}

\noindent \textbf{$F$-finiteness and valuations centered on Noetherian domains:}

Using Theorem \ref{f-finiteness-general-abhyankar}, one can generalize (\ref{f-finiteness-divisorial}) to a non function field setting as follows.

\begin{enumerate}
\setcounter{enumi}{6}
\item \label{F-finite-aluation-centered-Noetherian-domain} Let $\nu$ be a non-trivial valuation on $K$ centered on an $F$-finite, Noetherian local domain $R$. Then the valuation ring $V$ of $\nu$ is $F$-finite if and only if $V$ is a DVR and $R$ is an Abhyankar center of $\nu$.

\begin{proof}[\textbf{Proof of (\ref{F-finite-aluation-centered-Noetherian-domain})}] 
For the forward implication, if $V$ is F-finite, then $[\Gamma_\nu:p\Gamma_\nu][\kappa_\nu:\kappa^p_\nu] = [K:K^p]$ holds automatically (see (\ref{assuming-f-finiteness})(ii)), and then Theorem \ref{f-finiteness-general-abhyankar} implies that $R$ is an Abhyankar center of $\nu$. In particular, $\Gamma_\nu$ is a non-trivial finitely generated abelian group, and so (\ref{assuming-f-finiteness})(ii)(d) $\Rightarrow$ $V$ is a DVR. This proves the forward implication. Conversely, if $R$ is an Abhyankar center of $\nu$, then Theorem \ref{f-finiteness-general-abhyankar} again implies
$$[\Gamma_\nu:p\Gamma_\nu][\kappa_\nu:\kappa_\nu^p] = [K:K^p].$$
Since $V$ is also a DVR by hypothesis, it is $F$-finite by (\ref{assuming-f-finiteness})(iv).
\end{proof}

\end{enumerate}

\vspace{1mm}

The above result has the following interesting consequence that we would like to highlight separately.

\begin{enumerate}
\setcounter{enumi}{7}

\item \label{non-Noetherian-$F$-finite-rings}
Suppose $\nu$ is a valuation of an $F$-finite field $K$ with valuation ring $V$ that satisfies either of the following conditions:
\begin{enumerate}
\item[(a)] $V$ is $F$-finite, but not Noetherian.
\item[(b)] $\dim(V) > s$, where $[K:K^p] = p^s$.
\end{enumerate}
Then $\nu$ is not centered on any excellent local domain whose fraction field is $K$.

\begin{proof}[\textbf{Proof of (\ref{non-Noetherian-$F$-finite-rings})}]
Since $K$ is $F$-finite, a Noetherian domain with fraction field $K$ is excellent if and only if it is $F$-finite (see Remark \ref{Abhyankar-center-remarks}(v)). Thus, it suffices to show that $\nu$ is not centered on any $F$-finite, Noetherian local domain if it satisfies (a) or (b).

Suppose $\nu$ satisfies (a). As $V$ is not Noetherian, (\ref{F-finite-aluation-centered-Noetherian-domain}) implies that $\nu$ cannot be centered on any $F$-finite, Noetherian local domain with fraction field $K$. %But since $K$ is $F$-finite, (a) follows using the observation that excellence and $F$-finiteness coincide for Noetherian domains with $F$-finite fraction fields \cite[Proposition 2.6.1]{DS16}. 

If $R$ is an $F$-finite, Noetherian local ring with fraction field $K$, then recall that we have the identity
$$p^{\dim(R)}[\kappa_R:\kappa^p_R] = [K:K^p].$$
In particular, this means $\dim(R) \leq s$, where $s$ is as above. Now if $\nu$ is centered on $R$, then Abhyankar's inequality (\ref{more-general-Abhyankar-ineq}) shows in particular that
$$\dim_{\mathbb Q}(\mathbb Q \otimes_{\mathbb Z} \Gamma_\nu) \leq \dim(R) \leq s.$$
However, it is well-known that the Krull dimension of $V$ is at most $\dim_{\mathbb Q}(\mathbb Q \otimes_{\mathbb Z} \Gamma_\nu)$ \cite[$\mathsection$ 10.2, Corollary to Proposition 3]{Bou89}. Then $\dim(V) \leq s$, which contradicts the hypothesis of (b). Hence $\nu$ cannot be centered on any $F$-finite, Noetherian domain with fraction field $K$.
\end{proof}

%\vspace{1mm}

\begin{example}
\label{valuation-non-perfect-no-F-finite-center}
Let $w$ be the valuation of $L(X)$ (where $L$ is a perfect field) constructed in Remark \ref{perfect-valuation-rings}(ii). The valuation ring $R_w$ satisfies conditions (a) and (b) of (\ref{non-Noetherian-$F$-finite-rings}). We have already observed that $R_w$ satisfies (a). To see that $R_w$ satisfies (b), note that the value group of $w$ has a proper, non-trivial isolated/convex subgroup. Thus $R_w$ has Krull dimension at least 2 \cite[$\mathsection$4.5]{Bou89}, while $[L(X):L(X)^p] = p$.  

Although $R_w$ is a valuation ring of a function field, it does not contain the ground field $L$. So even though $w/w^p$ is defectless, this example does not contradict (\ref{abhyankar-defectless}), or the problem of local uniformization in prime characteristic. %because $w$ is not an Abhyankar valuation of $K(X)/K$.
\end{example}

%\vspace{1mm}

\begin{remark}
\label{function-vs-arbitrary-field}
If $K/k$ is an $F$-finite function field, and $\nu$ is a valuation of $K/k$ with valuation ring $V$, then (\ref{abhyankar-defectless}) shows that $V$ cannot satisfy (\ref{non-Noetherian-$F$-finite-rings})(a), while (\ref{Abh-ineq}) shows that $V$ cannot satisfy (\ref{non-Noetherian-$F$-finite-rings})(b). Thus, the pathologies of (\ref{non-Noetherian-$F$-finite-rings}) do not arise for valuations of function fields that are trivial on the ground field.
\end{remark}
\end{enumerate}

\vspace{2mm}

\noindent \textbf{Frobenius splitting:}

\begin{enumerate}

\setcounter{enumi}{8}\item \label{f-finite-frobenius-splitting} \cite[Corollary 4.1.2]{DS16} Any $F$-finite valuation ring is Frobenius split. This follows from (\ref{general-f-finiteness}) because $F_*V$ is then a free $V$-module.

\begin{remark}
(\ref{f-finite-frobenius-splitting}) is a special case of a general phenomenon of valuation rings which is independent of the characteristic of the ring. Recall, that a ring $R$ is called a \emph{splinter} if any module finite ring extension
$$\varphi: R \rightarrow S$$
has an $R$-linear left inverse. For example, from recent work of Andr\'e \cite{And18} (see also \cite{Bha18, HM18}) and earlier work of Hochster \cite{Hoc73}, it follows that any regular ring is a splinter. We want to show that valuation rings are also splinters. So let $V$ be a valuation ring (of any characteristic), and 
$$\varphi: V \rightarrow S$$
a module finite ring extension. Choose a prime ideal $P$ of $S$ that lies over the zero ideal of $V$. Then the composition
$$V \xrightarrow{\varphi} S \twoheadrightarrow S/P$$
is also a module finite ring extension, and it suffices to show this composite extension splits. Thus we may assume $S$ is a domain, which makes $S$ a finitely generated torsion-free $V$ module. But finitely generated torsion free modules over valuation rings are free. Nakayama's lemma implies there exists a basis  of $S$ over $V$ containing $1$, and then one can easily construct many splittings of $\varphi$ with respect to such a basis.
\end{remark}

\vspace{1.5mm}

\item \label{noetherian-Frobenius-splitting} \cite[Corollary 4.2.2]{DS16} The following are equivalent for a \emph{Noetherian} valuation ring $(V, \mathfrak m, \kappa)$ with F-finite fraction field $K$:

$\bullet$ $V$ is Frobenius split.

$\bullet$ $V$ is $F$-finite.

$\bullet$ $V$ is excellent.

$\bullet$ $\dim_{\kappa^p}(V/\mathfrak{m}^p) = [K:K^p]$.

$\bullet$ $[\Gamma:p\Gamma][\kappa:\kappa^p] = [K:K^p]$.

\vspace{1.5mm}

\item \label{Frobenius-splitting-DVR-Abhy-center} Any DVR that admits a Noetherian, $F$-finite, Abhyankar center is Frobenius split.

\begin{proof}[\textbf{Proof of \ref{Frobenius-splitting-DVR-Abhy-center}}] $V$ is $F$-finite by (\ref{F-finite-aluation-centered-Noetherian-domain}), hence Frobenius split by (\ref{f-finite-frobenius-splitting}).
\end{proof}

\vspace{1.5mm}

\item \label{complete-DVR} Any complete DVR of prime characteristic is Frobenius split.

\begin{proof}[\textbf{Proof of \ref{complete-DVR}}] Let $V$ be a complete DVR of prime characteristic with residue field $\kappa$. Since $V$ is equicharacteristic, by Cohen's Structure Theorem for complete rings, 
$$V \cong \kappa[[t]],$$
as rings, and clearly $\kappa^p[[t^p]]$ is a direct summand of $\kappa[[t]]$, even when $[\kappa:\kappa^p]$ is not finite.
\end{proof}

\vspace{1.5mm}

\item \label{Frobenius-splitting-Abhyankar-vals} [Section \ref{Proof of Theorem Theorem-A}] For an Abhyankar valuation ring $V$ of an $F$-finite function field $K/k$, if the residue field $\kappa$ is separable over $k$, then $V$ is Frobenius split. In particular Abhyankar valuation rings over perfect ground fields of prime characteristic are always Frobenius split.

\vspace{1.5mm}

\item \label{totally-unramified} We have seen in (\ref{abhyankar-defectless}) that Abhyankar valuations in $F$-finite function fields are characterized as those valuations $\nu$ such that $\nu/\nu^p$ is defectless, that is, $[\Gamma_\nu:p\Gamma_\nu][\kappa_\nu:\kappa^p_\nu] = [K:K^p]$. At the opposite extreme is a valuation $\nu$ such that the extension $\nu/\nu^p$ is \emph{totally unramified}, which means that
\begin{equation}
\label{total-ram}
[\Gamma_\nu:p\Gamma_\nu][\kappa_\nu:\kappa^p_\nu] = 1.
\end{equation}
We want to show that when $\nu/\nu^p$ is totally unramified and $K$ is not perfect, the valuation ring $V$ of $\nu$ \emph{cannot} be Frobenius split\footnote{The author thanks Ray Heitmann for showing him a proof of (\ref{totally-unramified}) in the special case of $\mathbb Q$-valuations of $\mathbb{F}_p(X,Y)$. Our argument above is a generalization of Heitmann's argument to all totally unramified extensions $\nu/\nu^p$.}. Note that (\ref{total-ram}) implies $\Gamma_\nu = p\Gamma_\nu$ and $\kappa_\nu = \kappa^p_\nu$, that is, the value group is $p$-divisible and the residue field is perfect. The $p$-divisibility of $\Gamma_\nu$ shows that
$$\mathfrak{m} = \mathfrak{m}^{[p]}.$$
Then any Frobenius splitting $\varphi: V \rightarrow V^p$ maps the maximal ideal $\mathfrak{m}$ of $V$ into the maximal ideal of $V^p$, thereby inducing a Frobenius splitting of residue fields 
$$\widetilde{\varphi}: \kappa_\nu \rightarrow \kappa^p_\nu.$$
However, $\kappa_\nu$ is perfect, so that $\widetilde{\varphi}$ is just the identity map. Since $K$ is not perfect, $\varphi$ has a non-trivial kernel, that is, some non-zero $x \in V$ gets mapped to $0$. By $p$-divisibility, one can write $x = uy^p$, for a unit $u$ in $V$, and $y \neq 0$. Then $0 = \varphi(x) = y^p\varphi(u)$, which shows that $\varphi(u) = 0$. But this contradicts injectivity of $\widetilde\varphi$, proving that no Frobenius splitting of $V$ exists.

\begin{example}
\label{ex:non-F-split}
Valuations $\nu$ such that $\nu/\nu^p$ is totally unramified can occur even in function fields. For example, one can construct a $\mathbb{Q}$-valued valuation of $\mathbb{F}_p(X,Y)/\mathbb{F}_p$ (see \cite[Example 8]{Vaq06}). We claim that $\nu/\nu^p$ is then totally unramified. Indeed, since $\mathbb{Q}$ is not a finitely generated abelian group, $\nu$ is not an Abhyankar valuation of $\mathbb{F}_p(X,Y)/\mathbb{F}_p$. It follows by Abhyankar's inequality (\ref{Abh-ineq}) that if $\kappa_\nu$ is the residue field of $\nu$, then $\td{\kappa_\nu/\mathbb{F}_p} = 0$, that is, $\kappa_\nu$ is an algebraic extension of $\mathbb{F}_p$. Consequently, $\kappa_\nu$ is a perfect field, and so $[\kappa_\nu:\kappa_\nu^p] = 1$. Furthermore, $\mathbb{Q}$ is $p$-divisible. This shows that the equality in (\ref{total-ram}) holds, that is, $\nu/\nu^p$ is totally unramified.
\end{example}

%\begin{remark}
%Our argument in (9) is a straightforward generalization (to the totally unramified setting) of an argument shown to the author by Raymond Heitmann in the specific case of why $\mathbb{Q}$-valuations of $\mathbb{F}_p(x,y)$ cannot be Frobenius split.
%\end{remark}

\item \label{spherical-complete} Let $V$ be a valuation ring of Krull dimension $1$ and fraction field $K$. Since the value group of $V$ is an ordered subgroup of $\mathbb{R}$, the induced valuation $\nu$ on $K$ gives a multiplicative norm $|\cdot |_\nu: K \rightarrow \mathbb{R}_{\geq 0}$ via the assignment $|x|_\nu = e^{-\nu(x)}$ for $x \in K-\{0\}$ and $|0|_\nu = 0$. The norm $|\cdot |_\nu$ satisfies the multiplicative analogue of the axioms of a valuation:
\begin{enumerate}
	\item $|xy|_\nu = |x|_\nu|y|_\nu$
	\item $|x+y|_\nu \leq \max\{|x|_\nu, |y|_\nu\}$.
\end{enumerate}
In other words, $|\cdot |_\nu$ is an \emph{ultrametric} on $K$. Observe that the valuation ring $V$ of $K$ is then the subring of elements $x \in K$ such that $|x|_\nu \leq 1$. Given $x \in K$ and $r \in \mathbb{R}_{\geq 0}$, the \emph{closed ball} $B(x,r)$ of radius $r$ centered at $x$ is defined the usual way. A consequence of $|\cdot |_\nu$ being ultrametric is that if $r > 0$, then any closed ball is also open. We say that the field $K$ is \emph{spherically complete} with respect to the norm $|\cdot |_\nu$ if any nested sequence of non-empty closed balls $B_1 \supset B_2 \supset B_3 \supset \dots$ has a non-empty intersection. Note that here we do not require the centers of the $B_i$ to be the same, in which case the assertion that the intersection is non-empty is clear. If $K$ is spherically complete with respect to $|\cdot|_\nu$ then $K$ is also complete with respect to this norm. One can view the notion of a spherically complete field as a generalization of a field equipped with a complete discrete valuation, for a field of the latter type is always spherically complete \cite[Theorem 1.2.13]{GPS10}

We now claim that if $V$ has prime characteristic $p > 0$ and $(K, |\cdot |_\nu)$ is spherically complete with respect to $|\cdot|_\nu$, then $V$ is Frobenius split\footnote{Eric Canton, Takumi Murayama, Matthew Stevenson and the author realized (\ref{spherical-complete}) while working on a different project. He is grateful to them for allowing him to reproduce the result in this paper.}. This result may be viewed as a generalization of Frobenius splitting of complete discrete valuation rings (\ref{complete-DVR}). Let $|\cdot |_{\nu^p}$ be the corresponding norm on the subfield $K^p$ induced by $\nu^p$. Then $(K^p, |\cdot|_{\nu^p})$ is also spherically complete. Viewing $K$ as a vector space over $K^p$, one can verify that the norm $|\cdot |_\nu$ on $K$ gives $K$ the structure of a normed space over $K^p$ (i.e. the norm on $K$ is compatible with that on $K^p$). Since $K^p$ is spherically complete, the analytic form of the Hahn-Banach theorem over spherically complete fields \cite[Theorem 4.1.1]{GPS10} shows that the identity map
\[
\id_{K^p} : K^p \rightarrow K^p
\]
extends to a $K^p$-linear map $f: K \rightarrow K^p$ such that for all $x \in K$, 
$$|f(x)|_{\nu^p} \leq |x|_\nu.$$
Since the identity map sends $1 \mapsto 1$, it follows that $f$ also sends $1 \mapsto 1$, that is, $f$ is a Frobenius splitting of $K^p$. On the other hand, for any $x \in V$, $|f(x)|_{\nu^p} \leq |x|_\nu \leq 1$, that is, if $x \in V$, then $f(x) \in K^p \cap V = V^p$. Thus, restricting $f$ to $V$ gives a Frobenius splitting of $V$.

%\begin{remark}
%\end{remark}
\end{enumerate}

\vspace{2mm}

\noindent\textbf{$F$-regularity:}

Two notions of $F$-regularity were introduced in \cite{DS16}, generalizing strong $F$-regularity to a non-Noetherian and non $F$-finite setting-- \emph{split $F$-regularity} \cite[Definition 6.6.1]{DS16} and \emph{$F$-pure regularity} \cite[Definiton 6.1.1]{DS16}. Split $F$-regularity just drops the $F$-finite and Noetherian hypotheses from the definition of strong $F$-regularity, while $F$-pure regularity replaces splitting of certain maps by purity and seems to be the better notion for rings that are not $F$-finite. Split $F$-regularity $\Rightarrow$ $F$-pure regularity, but the converse is false. Indeed, any non-excellent DVR with an $F$-finite fraction field will be $F$-pure regular but not split $F$-regular. In particular, the DVR of Example \ref{non-excellent-DVR} is not excellent, so not split $F$-regular.

\begin{enumerate}
\setcounter{enumi}{15} \item \label{F-pure-regularity} \cite[Thm 6.5.1 and Cor. 6.5.4]{DS16} For a valuation ring $V$ of prime characteristic, $V$ is $F$-pure regular if and only if it is Noetherian.

\vspace{2mm}

\item \label{split-F-regularity} \cite[Corollary 6.6.3]{DS16} Let $V$ be a valuation ring whose fraction field is $F$-finite. The following are equivalent (see also (\ref{noetherian-Frobenius-splitting})):

$\bullet$ $V$ is split $F$-regular.

$\bullet$ $V$ is Noetherian and $F$-finite.

$\bullet$ $V$ is excellent.

$\bullet$ $V$ is Noetherian and Frobenius split.

$\bullet$ If $V$ is a valuation ring of a function field, then $V$ is divisorial.
%\noindent F-pure regularity is natural generalization of the notion of strong F-regularity to a non-Noetherian and non F-finite setting. See .
\end{enumerate}

\begin{remark}
(\ref{F-pure-regularity}) and (\ref{split-F-regularity}) indicate that $F$-regularity is perhaps a useful notion of singularity only for Noetherian rings. %Also, remark \ref{perfect-valuation-rings}(ii) shows that, in stark contrast to Noetherian rings, there exist non-Noetherian rings $R$ which are not $F$-regular even though $F_*R$ is a free $R$-module of finite rank.
\end{remark}

\noindent \textbf{Open Questions:} Just as is the case in geometry, our results indicate that Frobenius splitting is the most mysterious F-singularity for valuation rings with many basic open questions.

The proof of Frobenius splitting of Abhyankar valuation rings of function fields over $F$-finite ground fields uses the local monomialization result of Knaf and Kuhlmann (Theorem \ref{local-uniformization}), hence also the hypothesis that the residue field of the valuation ring is separable over the ground field. However, it is probably the case that any Abhyankar valuation ring of an $F$-finite function field is Frobenius split, and this will from our proof if one can remove the separability hypothesis from Theorem \ref{local-uniformization}. Moreover, a natural question is if one can generalize our result on Frobenius splitting of Abhyankar valuations to valuations, not necessarily of function fields, that admit $F$-finite, Noetherian, Abhyankar centers satisfying `mild' singularities such as $F$-regularity. For example, (\ref{F-finite-aluation-centered-Noetherian-domain}) shows that a discrete valuation admitting a Noetherian, $F$-finite, Abhyankar center is Frobenius split.

$F$-singularities of a valuation ring $V$ are intimately related to basic properties of the corresponding extension of valuations $\nu/\nu^p$. For example, (\ref{abhyankar-defectless}) shows that at least for function fields over perfect ground fields, when $\nu/\nu^p$ is defectless, $\nu$ is Abhyankar and its valuation ring is Frobenius split. On the other hand, when $\nu/\nu^p$ has maximal defect, that is when $\nu/\nu^p$ is totally unramified, (\ref{totally-unramified}) shows that $V$ cannot be Frobenius split unless it is a perfect ring. However, Frobenius splitting of $V$ remains mysterious when the defect of $\nu/\nu^p$ is not one two possible extremes. 

For instance, suppose $K/k$ is an $F$-finite function field. Is there a non-Abhyankar valuation of $K/k$ whose valuation ring is Frobenius split? We can use (\ref{split-F-regularity}) to conclude that such valuations, if they exist, cannot be discrete. On the other hand, Example \ref{valuation-non-perfect-no-F-finite-center} shows that if we relax the condition that the valuation is trivial on the ground field, then there exists a valuation $w$ of $K$, not trivial on $k$, whose valuation ring is Frobenius split. Moreover, $w$ has the feature that it is not centered on any excellent domain whose fraction field is $K$. However, even in this example, the extension $w/w^p$ is defectless. Furthermore, in light of (\ref{spherical-complete}) it is interesting to ask when a (complete) non-Archimedean and non-perfect field of prime characteristic has a Frobenius split valuation ring. Note that by completing the valuation of Example \ref{ex:non-F-split} it follows that such valuation rings are not always Frobenius split. 

There are interesting open questions pertaining to Frobenius splitting even for Noetherian valuation rings. As far as we know, it is not known if every excellent DVR of prime characteristic is Frobenius split. (\ref{noetherian-Frobenius-splitting}) (resp. (\ref{complete-DVR})) provides an affirmative answer when the fraction field of a DVR is $F$-finite (resp. when the DVR is complete). At the same time it is worth recalling that the Frobenius map is always pure for any valuation ring of prime characteristic by (\ref{flatness}), and purity seems to be a better notion than splitting for rings that are not $F$-finite. %The author also does not know a single example of a non-Abhyankar Frobenius split valuation ring of a function field.

%\vspace{-4ex}

%\vspace{-4ex}

\bibliographystyle{amsalpha}
\footnotesize
\end{document}